\theoremstyle{plain}
\newtheorem{theorem}{Theorem}
\newtheorem{lemma}[theorem]{Lemma}
\newtheorem{corollary}[theorem]{Corollary}
\theoremstyle{definition}
\newtheorem{definition}[theorem]{Definition}
\numberwithin{theorem}{section}
\numberwithin{equation}{section}
\begin{document}
\selectlanguage{english}

\title{The coarea formula for projections of Lipschitz mappings
on Carnot groups}
\author{Sergey Basalaev\thanks{The work is supported by the
Mathematical Center in Akademgorodok under the agreement
No.\ 075-15-2022-282 with the Ministry of Science and Higher
Education of the Russian Federation.}}
\maketitle

\begin{abstract}
A special type of coarea inequality is proved for compositions of
Lipschitz mappings of Carnot groups with projections along
horizontal vector fields. It is proved that the equality is
achieved for mappings with finite codistortion and mappings on
the Heisenberg group.

\emph{Keywords}: Carnot group, coarea formula,
Eilenberg inequality.

\emph{2010 Mathematics Subject Classification}:
22E30,  
58C35,  
28A75.  
\end{abstract}

\section{Introduction}

H.\ Federer in~\cite{FedCoarea} established
the well-known coarea formula
\begin{equation}
\label{EqEuclideanCoarea}
  \intop_{\mathbb{R}^k}
  \mathcal{H}^{n-k}(f^{-1}(y)) \, dy
  =
  \intop_{\Omega} J(x, f) \, dx
\end{equation}
where $f : \Omega \to \mathbb{R}^k$,
$\Omega \subset \mathbb{R}^n$, is a Lipschitz mapping and
$J(x, f) = \sqrt{\det Df(x) Df(x)^*}$.
This formula has numerous applications
in analysis on Euclidean
spaces~\cite{FedBook, EGBook, GMSBook, LiYaBook}.
It extends naturally to the Riemannian spaces~\cite{ChavezBook}
and was also extended to Sobolev mappings of Euclidean
spaces~\cite{MSZ2003} and to Lipschitz mappings defined on
$\mathcal{H}^k$-rectifiable sets in metric spaces~\cite{AK2000}.

Nowadays, many topics of geometric measure theory are studied in
sub-Riemannian spaces which naturally arise e.\,g.\ in the theory
of subelliptic equations, contact geometry, optimal control
theory, non-holonomic mechanics (see \cite{MoBook} for a great
review of the applications).
Our interest is in a sub-Riemannian analogue of a
particular case of this formula. Given a Lipschitz mapping
$\varphi : \Omega \to \mathbb{R}^n$, $\Omega \subset \mathbb{R}^n$, on Euclidean space and an orthogonal projection
$\mathrm{Pr}_j$ on the hyperplane $\{ x_j = 0 \}$ the
formula~\eqref{EqEuclideanCoarea} for
$\mathrm{Pr}_j \circ \varphi$ may be written as
\begin{equation}
\label{EqProjCoarea}
  \intop_{\mathbb{R}^{n-1}}
  \mathcal{H}^1 \bigl( (\mathrm{Pr}_j \circ \varphi)^{-1}(y) \bigr) \, dy
  = \intop_\Omega \left|
  \mathop{\mathrm{adj}} D\varphi(x)
  \langle \partial_{x_j} \rangle \right| \, dx.
\end{equation}
P.\ Pansu in~\cite{Pa1982, Pa1989} introduced
a suitable notion of a differential $D_\mathcal{P}$
for mappings between Carnot groups with Carnot--Carath\'{e}odory
metrics and proved that Lipschitz continuous mappings between
such groups are differentiable a.\,e.\ in this sense.
In this paper we prove
(see the precise statement in Theorem~\ref{ThCoareaIneq})
that for a Lipschitz mapping
$\varphi : \Omega \to \mathbb{G}$, $\Omega \subset \mathbb{G}$,
on a Carnot group $\mathbb{G}$ and a projection $\mathrm{Pr}_j$
on the hyperplane $\Pi_j = \{ x_j = 0 \}$ along the integral lines of a
horizontal left-invariant vector field $X_j$ the following
inequality holds
\begin{equation}
\label{EqIntroIneq}
  \intop_{\mathbb{R}^{n-1}}
  \mathcal{H}^1 \bigl( (\mathrm{Pr}_j \circ \varphi)^{-1}(y) \bigr) \, dy
  \le C \intop_\Omega \left|
  \mathop{\mathrm{adj}} D_\mathcal{P}\varphi(x)
  \langle X_j \rangle \right| \, dx.
\end{equation}
The inequality is still useful for geometric measure theory
since it leads to a Sard type property that the set of singular
values of $\mathrm{Pr}_j \circ \varphi$ is negligible. However,
it is interesting even on its own because, despite
extensive research, the coarea formula for mappings between
Carnot groups is still not well understood.
Most of the results in this direction are established only for
sufficiently smooth contact mappings~\cite{Pa1982, He1994,
Magn2004, Magn2006, Magn2008, KarmVod2009, KarmVod2013, MontiVit2015}
or for Lipschitz mappings in the special case where
$\ker D_\mathcal{P} f$ is a normal
subgroup of the codomain~\cite{Magn2002, Magn2005, Magn2011, Corni2022, JuliaGolo2022}. When the latter condition is not
fullfilled very little is known. Even the case of the mapping
$\mathbb{H}^1 \to \mathbb{R}^2$, $\mathbb{H}^1$ being the
Heisenberg group, is a very challenging problem~\cite{LeoMagn2011, Kozh2011, KozhPhD, Bas2015, Magn2018}.

We emphasize that the inequality~\eqref{EqIntroIneq} is new
and not a consequence of any of these works. The major obstacle
here is that the projection $\mathrm{Pr}_j$ is generally not
contact and therefore not Lipschitz in
Carnot--Carath\'{e}odory metric. As a consequence the
composition $\mathrm{Pr}_j \circ \varphi$ does not have enough
regularity in both Riemannian and sub-\hspace{0pt}Riemannian
sense. Some developments for coarea of non-contact mappings
may be found in~\cite{Karm2019, Karm2024} though only for
$C^1$-smooth mappings with non-vanishing Jacobian. The case
of regular values is actually quite straightforward in our
case since it reduces to a combination of Fubini theorem and
a change of variable. The case of singular values is what
presents a real challenge. The key idea here is that
while projections behave badly from the metric point of view
they have nice properties in terms of measure. This allows us
to prove Eilenberg type inequality~\ref{LemmaCoareaAC}. Then,
we modify and adapt the reasoning of~\cite{Magn2002} to prove the
inequality~\eqref{EqIntroIneq}.

The paper is structured as follows. In Section~2 we introduce
Carnot groups and all the necessary structures on them.
In Section~3 we define projections along the vector fields
and derive their basic properties. As a technical tool a
Fubini type Theorem~\ref{ThFubini} is proved. In section~4
we state and prove the coarea inequality,
Theorem~\ref{ThCoareaIneq}, and some cases where the equality
is achieved, namely functions of finite codistortion
(Corollary~\ref{ThCoareaFinCod}) and
functions on $\mathbb{H}^1$ (Corollary~\ref{ThCoareaH1}).

\section{Carnot groups}

Let $\mathbb{G}$ be a connected simply connected nilpotent Lie
group. The group $\mathbb{G}$ is \emph{graded}
(see e.\,g.~\cite{RS1976, FS, BLU}) if there are fixed subspaces
$V_1, \ldots, V_m$ of its Lie algebra of left-invariant vector
fields $\mathfrak{g}$ such that
\[
  \mathfrak{g} = V_1 \oplus \dots \oplus V_m
  \quad \text{and} \quad
  [V_1, V_k] \subset V_{k+1},
  \quad k = 1, \ldots, m,
\]
where we assume $V_{m+1} = \{ 0 \}$.
Denote $n_i = \dim V_i$,
$N = n_1 + \ldots + n_m = \dim \mathfrak{g}$.
The group $\mathbb{G}$ is \emph{stratified}
or is a \emph{Carnot group} if $\mathfrak{g}$ is generated
by $V_1$, i.\,e. $[V_1, V_k] = V_{k+1}$ for $k = 1, \ldots, m$.

We call a collection of linear independent
left-invariant vector fields
$X_1, \ldots, X_N$ the \emph{graded basis} of
$\mathfrak{g}$ if for each $k = 1, \ldots, m$
some subcollection of these fields is a basis of~$V_k$.
To each $X_i$ we assign a formal degree
$\sigma_i = k$ if $X_i \in V_k$. We assume that $X_i$
are ordered by degree, in particular $X_1, \ldots, X_{n_1}$
span $V_1$.

The exponential mapping
$g = \exp \Big(\sum\limits_{i=1}^N x_i X_i \Big)(e)$
($e$~being the neutral element of $\mathbb{G}$)
is a global diffeomorphism and defines a system of
\emph{canonical coordinates of the 1st kind} on~$\mathbb{G}$.
For convenience we identify the point $g \in \mathbb{G}$ with
its coordinates $(x_1, \ldots, x_N) \in \mathbb{R}^N$,
so diffeomorphisms $\exp : \mathfrak{g} \to \mathbb{G}$ and
$\log : \mathbb{G} \to \mathfrak{g}$ are identities
\[
  \exp \Big(\sum\limits_{i=1}^N x_i X_i \Big)
  = (x_1, \ldots x_N),
  \qquad
  \log (x_1, \ldots x_N) = \sum\limits_{i=1}^N x_i X_i.
\]
The group operation $x \cdot y$ in the canonical coordinates
takes the form
\begin{equation}
\label{EqGroupMultCoords}
(x \cdot y)_i = x_i + y_i
  + \sum_{\sigma_k > \sigma_i} P_{ik}(x, y),
\end{equation}
where $P_{ik}$ are polynomials depending only on coordinates
of degree less than $\sigma_i$.
In particular, $(x \cdot y)_i = x_i + y_i$ for $i = 1, \ldots, n_1$.

The \emph{dilations} $\delta_\lambda$ defined by
$\delta_\lambda (x_1, \ldots, x_N) =
(\lambda^{\sigma_1} x_1, \ldots, \lambda^{\sigma_N} x_N)$
are group automorphisms for all $\lambda > 0$.
We denote
the \emph{homogeneous dimension} of $\mathbb{G}$ as
\[
  \nu = \sum\limits_{k=1}^m k \, n_k
      = \sum\limits_{i=1}^N \sigma_i.
\]

We call the subspace
$V_1 = H \mathbb{G}$ a \emph{horizontal space}
of~$\mathfrak{g}$, its elements are the
\emph{horizontal vector fields}. An absolutely
continuous curve $\gamma$ is \emph{horizontal}
if $\dot\gamma \in H\mathbb{G}$ a.\,e.
If $\mathbb{G}$ is a Carnot group, by Chow--Rashevskii theorem
any two points can be connected by a horizontal curve.
The sub-\hspace{0pt}Riemannian structure on Carnot group
$\mathbb{G}$ is given by a scalar product on $H \mathbb{G}$.
The \emph{Carnot--Carath\'{e}odory} distance
$\rho(x,y)$ between two points $x, y \in \mathbb{G}$
is the infimum of lengths of horizontal curves with the
endpoints $x$ and $y$. It is left-invariant and
$\delta_\lambda$-homogeneous:
\[
  \rho(x, y) = \rho(zx, zy),
  \quad
  \rho(\delta_\lambda x, \delta_\lambda y)
  = \lambda \rho(x, y),
  \qquad
  x, y, z \in \mathbb{G}, \:\: \lambda > 0.
\]
The topology given by the Carnot--Carath\'{e}odory
metric is equivalent to the Euclidean one.
However, unless $H \mathbb{G} = T \mathbb{G}$, the metric
itself is not equivalent to any Riemannian metric.
We denote an open ball in the metric~$d$ as $B_r(x)$,
where $x \in \mathbb{G}$, $r > 0$ is a radius. We may omit
$x$ for the ball centered at the origin,
i.\,e.\ $B_r = B_r(0)$.

For the purposes of proofs we extend the scalar product
to the whole $\mathfrak{g}$ in such a way that the subspaces
$V_1, \ldots, V_m$ are orthogonal. In particular, we may assume
that $X_1, \ldots, X_N$ is an orthonormal basis.
So, $\mathbb{G}$ is also equipped with the left-invariant
Riemannian structure. The metric and measures corresponding the
Riemannian structure are denoted, e.\,g.\ as
$\rho_{\mathrm{riem}}$ or $\mathcal{H}^1_{\mathrm{riem}}$.
Note, that the results obtained are indepenent of a particular
extension of a scalar product and so only depend on a
sub-\hspace{0pt}Riemannian structure of a group.

The Lebesgue measure $\mathcal{L}^N$ on $\mathbb{R}^N$ is a
bi-invariant Haar measure on $\mathbb{G}$. We denote
integration by $\mathcal{L}^N$ simply as $dx$.

\begin{definition}
\emph{Outer $s$-dimensional (spherical) Hausdorff $\varepsilon$-measure}
$\mathcal{H}^s_\varepsilon$, $s \ge 0$,
$\varepsilon > 0$, of $E \subseteq \mathbb{G}$
with respect to Carnot--Carath\'{e}odory distance $\rho$
is defined as
\[
  \mathcal{H}^s_\varepsilon (E) = \beta_s \inf \Bigl\{
    \sum_{k=1}^\infty r_k^s :
    E \subseteq \bigcup_{k \in \mathbb{N}} B(x_k, r_k),
    \:
    \sup_{k \in \mathbb{N}} r_k \le \varepsilon
  \Bigr\},
\]
where $\beta_s > 0$ is some normalization constant.
We do not specify the value of this constant besides
$\beta_1 = \omega_1 = 2$.\footnote{
In Euclidean spaces the value of $\beta_s$ is usually assumed
to be
$\omega_s = \frac{\pi^{s/2}}{\Gamma(1 + s/2)}$.
With such a choice the measure $\mathcal{H}^k$ coinsides with
the Lebesgue measure $\mathcal{L}^k$ on $k$-dimensional
subspaces. On Carnot group it may be reasonable to choose
other values, for instance such that $\mathcal{H}^\nu$
coinsides with $\mathcal{L}^N$.}
\end{definition}

In the subsequent we use the following properties
of $\varepsilon$-measures.

\begin{lemma}[Properties of Hausdorff $\varepsilon$-measures]
\label{EpsHausProps}
~

\begin{enumerate}
\item
$\mathcal{H}^s_\varepsilon(U) < +\infty$
for every bounded $U \subset \mathbb{G}$;

\item
$\mathcal{H}^s_\varepsilon(x \cdot E)
= \mathcal{H}^s_\varepsilon(E)$
for all $E \subseteq \mathbb{G}$, $x \in \mathbb{G}$;

\item
$\mathcal{H}^s_\varepsilon(\delta_r E)
= r^s \mathcal{H}^s_{\varepsilon/r}(E)$
for all $E \subseteq \mathbb{G}$, $r > 0$;

\item
If $K_n \subset \mathbb{G}$, $n \in \mathbb{N}$,
is a decreasing sequence of nested compact sets then
\[
  \mathcal{H}^s_\varepsilon \Big(
  \bigcap\limits_{n \in \mathbb{N}} K_n \Big)
  = \lim\limits_{n \to \infty} \mathcal{H}^s_\varepsilon(K_n).
\]
\end{enumerate}

\end{lemma}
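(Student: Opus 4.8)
The plan is to treat the four items separately: the first three follow directly from the defining properties of the Carnot--Carath\'{e}odory metric and the structure of admissible covers, while item (4) is the only substantial point and rests on a compactness argument. Throughout I take the covering balls $B(x_k,r_k)$ in the definition to be open, consistently with the notation $B_r(x)$ for open balls introduced earlier.

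For item (1) I would use that the metric topology coincides with the Euclidean one, so a bounded set $U$ has compact closure and is therefore totally bounded. Hence $\overline{U}$ admits a finite cover by balls $B(x_1,\varepsilon), \ldots, B(x_M,\varepsilon)$ of radius $\varepsilon$, which is an admissible cover in the definition, yielding $\mathcal{H}^s_\varepsilon(U) \le \beta_s M \varepsilon^s < +\infty$. Items (2) and (3) follow because admissible covers transform correctly under the relevant maps. By left-invariance the translation $g \mapsto x \cdot g$ is a $\rho$-isometry sending $B(x_k,r_k)$ to $B(x \cdot x_k, r_k)$, so $\{B(x_k,r_k)\}$ covers $E$ admissibly if and only if $\{B(x \cdot x_k, r_k)\}$ covers $x \cdot E$ admissibly, with identical radii; passing to the infimum gives (2). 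For (3) the dilation $\delta_r$ maps $B(x_k,\rho_k)$ onto $B(\delta_r x_k, r\rho_k)$ by $\delta_\lambda$-homogeneity, so covers of $E$ with $\sup_k \rho_k \le \varepsilon/r$ correspond bijectively to covers of $\delta_r E$ with radii $r\rho_k$ of supremum $\le \varepsilon$, and $\sum_k (r\rho_k)^s = r^s \sum_k \rho_k^s$; factoring out $r^s$ and taking the infimum gives $\mathcal{H}^s_\varepsilon(\delta_r E) = r^s \mathcal{H}^s_{\varepsilon/r}(E)$.

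Item (4) is where the real work lies. Write $K = \bigcap_n K_n$. One inequality is immediate: since $K \subseteq K_n$ and $\mathcal{H}^s_\varepsilon$ is monotone (a cover of the larger set covers the smaller), the sequence $\mathcal{H}^s_\varepsilon(K_n)$ is non-increasing and bounded below by $\mathcal{H}^s_\varepsilon(K)$, so its limit exists and is at least $\mathcal{H}^s_\varepsilon(K)$. For the reverse inequality fix $\eta > 0$; by item (1), $\mathcal{H}^s_\varepsilon(K) < +\infty$, so there is an admissible cover by open balls with $\beta_s \sum_k r_k^s \le \mathcal{H}^s_\varepsilon(K) + \eta$. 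Set $U = \bigcup_k B(x_k,r_k)$, an open set containing $K$. The key claim is that $K_n \subseteq U$ for all sufficiently large $n$: otherwise the sets $K_n \setminus U = K_n \cap U^c$ would be a nested family of nonempty compact sets whose intersection $K \setminus U$ is empty, contradicting the finite intersection property. Once $K_{n_0} \subseteq U$, the same cover is admissible for $K_{n_0}$, so $\mathcal{H}^s_\varepsilon(K_{n_0}) \le \mathcal{H}^s_\varepsilon(K) + \eta$; since the sequence is non-increasing and $\eta$ is arbitrary, $\lim_n \mathcal{H}^s_\varepsilon(K_n) \le \mathcal{H}^s_\varepsilon(K)$.

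The main obstacle is precisely item (4): because $\mathcal{H}^s_\varepsilon$ is merely an outer measure and is not countably additive, continuity from above cannot be invoked abstractly and must be proved by hand. The delicate point is that the near-optimal cover of $K$ must be taken by \emph{open} balls in order for the finite-intersection-property argument to apply (with closed balls the sets $K_n \cap U^c$ need not be compact). Adopting the open-ball convention of the definition removes this difficulty; had the balls been closed, one would enlarge each radius slightly, which is harmless for those $r_k$ strictly below $\varepsilon$, the only case that can occur up to an arbitrarily small loss in the covering sum.
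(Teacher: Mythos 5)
Your proposal is correct and follows essentially the same route as the paper: items (1)--(3) by direct manipulation of admissible covers, and item (4) by monotonicity in one direction and, in the other, by taking a near-optimal open cover $U$ of $K=\bigcap_n K_n$ and showing $K_n\subseteq U$ for large $n$ (your finite-intersection-property argument is just a spelled-out version of the paper's appeal to openness of the union). Your remark on the open-ball convention is a sensible clarification but does not change the substance.
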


\begin{proof}
The properties 1--3 are fairly obvious. For the last property
let $K = \bigcap_{n \in \mathbb{N}} K_n$. Then
$\mathcal{H}^s_\varepsilon(K) \le \mathcal{H}^s_\varepsilon(K_n)$ for all $n \in \mathbb{N}$. On the other hand, since
$K$ is a compact, for each $\delta > 0$ it can be covered by
a finite family of balls $B_k = B(x_k, r_k)$,
$k = 1, \ldots, m$, such that $r_k \le \varepsilon$ and
\[
  \beta_s \sum_{k=1}^m r_k^s
  < \mathcal{H}^s_\varepsilon(K) + \delta.
\]
Since $\bigcup B_k$ is open there is some $n \in \mathbb{N}$
such that $K_n \subset \bigcup B_k$ and therefore
\[
  \mathcal{H}^s_\varepsilon(K_n)
  \le \beta_s \sum_{k=1}^m r_k^s
  < \mathcal{H}^s_\varepsilon(K) + \delta.
\]
It follows that
$\mathcal{H}^s_\varepsilon(K_n) \to
\mathcal{H}^s_\varepsilon(K)$ as $n \to \infty$
and the lemma is proved.
\end{proof}

\begin{definition}
\emph{Outer $s$-dimensional (spherical) Hausdorff measure}
$\mathcal{H}^s$, $s > 0$,
of $E \subseteq \mathbb{G}$ with respect to $d_{cc}$
is defined as
\[
  \mathcal{H}^s (E) =
  \lim_{\varepsilon \to 0+} \mathcal{H}^s_\varepsilon(E)
  = \sup_{\varepsilon > 0} \mathcal{H}^s_\varepsilon(E).
\]
\end{definition}

Recall that $\mathcal{H}^s$ is a regular Borel measure
on $\mathbb{G}$. The following properties easily follow
from its definition.

\begin{lemma}[properties of Hausdorff measures]
\label{HausProps}
~

\begin{enumerate}

\item
$\mathcal{H}^s(x \cdot E) = \mathcal{H}^s(E)$
for all $E \subseteq \mathbb{G}$, $x \in \mathbb{G}$;

\item
$\mathcal{H}^s(\delta_r E) = r^s \mathcal{H}^s(E)$
for all $E \subseteq \mathbb{G}$, $r > 0$.

\item
$\mathcal{H}^\nu = \Theta_\mathbb{G} \mathcal{L}^N$
for some constant $\Theta_\mathbb{G} > 0$
where $\mathcal{L}^N$ is a Lebesgue measure on $\mathbb{G}$.

\end{enumerate}
\end{lemma}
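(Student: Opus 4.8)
The plan is to read off the first two properties by passing to the limit $\varepsilon \to 0+$ in the corresponding statements of Lemma~\ref{EpsHausProps}, and to obtain the third from the uniqueness of the Haar measure, once $\mathcal{H}^\nu$ has been shown to be a nonzero locally finite measure.

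For property~1, I would write $\mathcal{H}^s(x \cdot E) = \sup_{\varepsilon > 0} \mathcal{H}^s_\varepsilon(x \cdot E)$ and apply the left-invariance of the $\varepsilon$-measure (Lemma~\ref{EpsHausProps}, item~2), so that each term equals $\mathcal{H}^s_\varepsilon(E)$ and the suprema coincide. For property~2, I would use item~3 of Lemma~\ref{EpsHausProps}, namely $\mathcal{H}^s_\varepsilon(\delta_r E) = r^s \mathcal{H}^s_{\varepsilon/r}(E)$; as $\varepsilon \to 0+$ the rescaled parameter $\varepsilon/r$ also runs to $0+$, so the right-hand side tends to $r^s \mathcal{H}^s(E)$, which is the claim.

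For property~3 the key observation is that $\mathcal{H}^\nu$ is left-invariant (by property~1) and, as recalled, a regular Borel measure. Hence by uniqueness of the Haar measure it suffices to prove that $\mathcal{H}^\nu$ is locally finite and does not vanish identically; the proportionality $\mathcal{H}^\nu = \Theta_\mathbb{G}\,\mathcal{L}^N$ is then automatic and I only need $0 < \Theta_\mathbb{G} < \infty$. Both bounds rest on the scaling identity $\mathcal{L}^N(B(x,r)) = r^\nu \mathcal{L}^N(B_1)$, which follows from left-invariance of $\mathcal{L}^N$ together with $B(x,r) = x \cdot \delta_r B_1$ and the fact that the Jacobian of $\delta_r$ equals $r^{\sum_i \sigma_i} = r^\nu$. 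For positivity, given any covering $B_1 \subseteq \bigcup_k B(x_k, r_k)$ with $r_k \le \varepsilon$ I would compare volumes, $\mathcal{L}^N(B_1) \le \sum_k \mathcal{L}^N(B(x_k,r_k)) = \mathcal{L}^N(B_1) \sum_k r_k^\nu$, whence $\sum_k r_k^\nu \ge 1$ and therefore $\mathcal{H}^\nu_\varepsilon(B_1) \ge \beta_\nu > 0$ for every $\varepsilon$, giving $\mathcal{H}^\nu(B_1) > 0$.

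The remaining, and main, obstacle is the upper bound, i.e.\ local finiteness of $\mathcal{H}^\nu$. Here I would fix a bounded set $U$ and, for each $r > 0$, extract by the Vitali $5r$-covering lemma a disjoint family $\{ B(x_k, r) \}$ centered in $U$ whose fivefold dilates cover $U$. Since these disjoint balls lie in a fixed bounded neighbourhood of $U$, the scaling identity bounds their number $M_r$ by comparing volumes, $M_r\, r^\nu \mathcal{L}^N(B_1) \le C_0$, so $M_r \le C r^{-\nu}$; consequently $\mathcal{H}^\nu_{5r}(U) \le \beta_\nu M_r (5r)^\nu \le \beta_\nu C 5^\nu =: C'$ uniformly in $r$. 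Letting $r \to 0+$ yields $\mathcal{H}^\nu(U) \le C' < \infty$. Combining local finiteness with the positivity above and the uniqueness of the Haar measure gives $\mathcal{H}^\nu = \Theta_\mathbb{G}\,\mathcal{L}^N$ with $0 < \Theta_\mathbb{G} < \infty$, as $\Theta_\mathbb{G} = \mathcal{H}^\nu(B_1)/\mathcal{L}^N(B_1)$.
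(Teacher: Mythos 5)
Your proposal is correct and follows essentially the same route as the paper: properties 1 and 2 are obtained by passing to the limit in the corresponding items of Lemma~\ref{EpsHausProps}, and property 3 by noting that $\mathcal{H}^\nu$ is a nonzero, locally finite, left-invariant Borel regular measure and hence proportional to $\mathcal{L}^N$, with $\Theta_\mathbb{G} = \mathcal{H}^\nu(B_1)/\mathcal{L}^N(B_1)$ read off from the scaling. The only difference is that you explicitly verify positivity (volume comparison of coverings) and local finiteness (the $5r$-covering packing bound), which the paper subsumes in the unproved assertion that $\mathcal{H}^\nu$ is a Radon measure; both of your verifications are sound.
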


The latter holds due to the fact that
$\mathcal{H}^\nu$ is a Radon measure on $\mathbb{G}$,
both measures are left-invariant and homogeneous with respect
to $\delta_r$:
\[
  \frac{\partial \mathcal{H}^\nu}{\partial \mathcal{L}^N}(x)
  = \lim_{r \to 0} \frac{\mathcal{H}^\nu(B_r(x))}{\mathcal{L}^N(B_r(x))}
  = \lim_{r \to 0} \frac{r^\nu \mathcal{H}^\nu(B_1)}{r^\nu \mathcal{L}^N(B_1)}
  = \frac{\mathcal{H}^\nu(B_1)}{\mathcal{L}^N(B_1)}
  = \Theta_{\mathbb{G}}.
\]

\section{Hyperplanes and projections}

\begin{definition}
For each $j = 1, \ldots, n_1$ and $s \in \mathbb{R}$
define the \emph{vertical hyperplane}
\[ \Pi_{s,j} = \{ x \in \mathbb{G} : x_j = s \}. \]
Denote $\Pi_j = \Pi_{0,j}$. Define also the axis
$L_j = \{ \exp(sX_j) : s \in \mathbb{R} \}$.
\end{definition}

\begin{lemma}
\label{LemmaSubgroups}
For all $j = 1, \ldots, n_1$
\begin{enumerate}

\item
$L_j$ is an Abelian subgroup of $\mathbb{G}$
of homogeneous degree~$1$.

\item
The sub-Riemannian Hausdorff measure $\mathcal{H}^1$,
Riemannian Hausdorff measure $\mathcal{H}^1_{\mathrm{riem}}$
and Lebesgue measure $\mathcal{L}^1$ coincide on $L_j$.

\item
$\Pi_j$ is a normal subgroup of $\mathbb{G}$
of homogeneous degree $\nu-1$.

\item
$\mathcal{H}^{\nu-1} = \Theta_{\Pi_j} \mathcal{L}^{N-1}$
on $\Pi_j$, where $\Theta_{\Pi_j} > 0$ is some constant,
$\mathcal{L}^{N-1}$ is a Lebesgue measure on $\Pi_j$.
\end{enumerate}
\end{lemma}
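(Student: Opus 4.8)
The plan is to verify each of the four claims about the axis $L_j$ and the hyperplane $\Pi_j$ by working directly in the canonical coordinates, exploiting the special structure of the group multiplication~\eqref{EqGroupMultCoords} for horizontal indices $j \le n_1$.

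\paragraph{Parts 1 and 3.} First I would establish the algebraic claims. Since $X_j$ is horizontal, the curve $\exp(sX_j)$ has coordinates $(0,\dots,0,s,0,\dots,0)$ with $s$ in the $j$-th slot. Because $j \le n_1$, the multiplication formula~\eqref{EqGroupMultCoords} gives $(x \cdot y)_j = x_j + y_j$ for the $j$-th coordinate, and more importantly for two points $a,b$ lying on $L_j$ all coordinates except the $j$-th vanish, so the polynomial correction terms $P_{ik}$ all evaluate to zero (each $P_{ik}$ is a sum of products of coordinates of degree $< \sigma_i$, and for a point on $L_j$ only the single coordinate $x_j$ of degree $1$ is nonzero, which cannot produce a term of degree $\ge 2$). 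Hence $L_j$ is closed under multiplication and inversion and is isomorphic to $(\mathbb{R},+)$, proving it is an Abelian subgroup; its homogeneous degree is $\sigma_j = 1$ since dilations act on the single coordinate by $\lambda^{\sigma_j} = \lambda$. For Part~3, I would check that $\Pi_j = \{x_j = 0\}$ is a subgroup using $(x\cdot y)_j = x_j + y_j = 0$ when $x_j = y_j = 0$, and then verify normality: for arbitrary $g \in \mathbb{G}$ and $h \in \Pi_j$ I would compute $(g h g^{-1})_j = g_j + h_j + (-g_j) = h_j = 0$, using that the $j$-th coordinate is additive with no correction. The homogeneous degree of $\Pi_j$ is $\nu - \sigma_j = \nu - 1$ since it is the complementary coordinate subspace to $L_j$.

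\paragraph{Part 2.} Next I would show the three one-dimensional measures agree on $L_j$. The key observation is that $L_j$ is itself a horizontal curve: its tangent $X_j$ lies in $H\mathbb{G} = V_1$, so the restriction of the Carnot--Carath\'{e}odory metric to $L_j$ coincides with the arc length measured by the sub-Riemannian (and Riemannian) scalar product. Since $X_j$ is a unit vector in the orthonormal graded basis, the sub-Riemannian length element, the Riemannian length element, and the Euclidean/Lebesgue parameter $ds$ all coincide along $L_j$. Because $\beta_1 = 2 = \omega_1$ is normalized to match the Euclidean convention, the spherical Hausdorff measures $\mathcal{H}^1$ and $\mathcal{H}^1_{\mathrm{riem}}$ both reduce to $\mathcal{L}^1$ on this straight line. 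I would make this rigorous by noting that on a geodesic segment the covering by metric balls achieves the length, so $\mathcal{H}^1$ restricted to $L_j$ is exactly linear measure.

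\paragraph{Part 4.} Finally, for the measure-theoretic identification on $\Pi_j$, I would argue exactly as in the proof of Lemma~\ref{HausProps}(3): the measure $\mathcal{H}^{\nu-1}$ restricted to $\Pi_j$ is a Radon measure, and since $\Pi_j$ is a homogeneous normal subgroup, both $\mathcal{H}^{\nu-1}\!\restriction_{\Pi_j}$ and the Lebesgue measure $\mathcal{L}^{N-1}$ on the coordinate hyperplane are left-invariant under the subgroup action and homogeneous of degree $\nu-1$ with respect to the restricted dilations $\delta_r$. By the same Radon--Nikodym computation as before, the density $\frac{\partial \mathcal{H}^{\nu-1}}{\partial \mathcal{L}^{N-1}}$ is the constant $\Theta_{\Pi_j} = \mathcal{H}^{\nu-1}(B_1 \cap \Pi_j)/\mathcal{L}^{N-1}(B_1 \cap \Pi_j)$, provided this quantity is finite and positive.

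The main obstacle I anticipate is Part~2, specifically justifying rigorously that no nontrivial covering of a segment of $L_j$ by Carnot--Carath\'{e}odory balls can beat the Euclidean length; while intuitively clear because $L_j$ is a length-minimizing horizontal geodesic, one must confirm that the metric diameter of $L_j \cap B_r(x)$ behaves linearly and that the normalization $\beta_1 = 2$ produces exactly Lebesgue measure rather than a constant multiple. The finiteness and positivity of $\Theta_{\Pi_j}$ in Part~4 requires knowing $\dim_{\mathcal{H}} \Pi_j = \nu - 1$, which follows from the homogeneous-degree computation in Part~3, so Parts 3 and 4 are tightly linked and should be proved in that order.
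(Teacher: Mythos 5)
Your proposal is correct and follows essentially the same route as the paper: the algebraic parts use the additivity of the $j$-th coordinate in~\eqref{EqGroupMultCoords}, and Part~4 repeats the Radon--Nikodym argument of Lemma~\ref{HausProps}. The only slight divergence is Part~2, where the paper first notes that all three measures are bi-invariant on $L_j \cong \mathbb{R}$ and hence proportional, then pins the constants by observing that both the Riemannian and sub-Riemannian balls satisfy $B_r(0) \cap L_j = (-r,r)$ together with $\beta_1 = 2$ --- which is exactly the fact you flag as your ``main obstacle,'' so both arguments hinge on the same (standard) point that $x_j$ is $1$-Lipschitz for the CC metric, giving the lower bound $\rho(0,\exp(sX_j)) \ge |s|$ to match the obvious upper bound.
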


\begin{proof}
From the definition it is obvious that $L_j \cong \mathbb{R}$.
All three measures $\mathcal{H}^1$,
$\mathcal{H}^1_{\mathrm{riem}}$, $\mathcal{L}^1$ are
bi-invariant on $L_j$, hence, proportional.
The intersection of both Riemannian and sub-Riemannian balls
$B_r(0)$ with $L_j$ is an interval $(-r, r)$ and so
$\mathcal{H}^1|_{L_j} = \mathcal{H}^1_{\mathrm{riem}}|_{L_j}$.
Both measures coincide with $\mathcal{L}^1$ due to the
choice of a normalizing constant $\beta_1 = 2$.

Recall, that from~\eqref{EqGroupMultCoords}
$(x \cdot y)_j = x_j + y_j$ for $j = 1, \ldots, n_1$.
Hence, for $x, y \in \Pi_j$ (i.\,e.\ $x_j = y_j = 0$)
we have $xy, x^{-1} \in \Pi_j$ meaning $\Pi_j$ is a subgroup.
Next, for all $p \in \Pi_j$, $z \in \mathbb{G}$ we have
$(zpz^{-1})_j = 0$, i.\,e.
$\Pi_j$ is a normal subgroup.

It is easy to see that
$d(\delta_\lambda y) = \lambda^{\nu-1} dy$ on $\Pi_j$.
Since both measures $\mathcal{H}^{\nu-1}$
and $\mathcal{L}^{N-1}$ are left-invariant $\Pi_j$
and $\delta_r$-homogeneous of degree $\nu-1$,
similarly to Lemma~\ref{HausProps} we obtain
$\frac{\partial \mathcal{H}^{\nu-1}}{\partial \mathcal{L}^{N-1}} = \frac{\mathcal{H}^{\nu-1}(B_1 \cap \Pi_j)}{\mathcal{L}^{N-1}(B_1 \cap \Pi_j)}
= \Theta_{\Pi_j}$.
\end{proof}

Note, that $\Pi_j$ is always a graded nilpotent group but not
necessarily a Carnot group.

\begin{definition}
\label{DefProjection}
By the last Lemma for each
$j = 1, \ldots, n_1$ the group $\mathbb{G}$
is a semidirect product $\mathbb{G} = \Pi_j L_j$,
i.\,e.\ every $x \in \mathbb{G}$ is uniquely represented as
$x = p \exp(x_j X_j)$, $p \in \Pi_j$. Define the
\emph{projections}
$\mathrm{Pr}_j : \mathbb{G} \to \Pi_j$,
$\mathrm{pr}_j : \mathbb{G} \to \mathbb{R}$
as $\mathrm{Pr}_j(x) = p$,
$\mathrm{pr}_j(x) = x_j$.
In other words $\mathrm{Pr}_j$ is a projection on $\Pi_j$
along the integral lines of the vector field~$X_j$.
\end{definition}

The following properties are easy to check from the
definition.

\begin{lemma}[the properties of the projection]
\label{ProjProperties}
~

\begin{enumerate}

\item
$\mathrm{Pr}_j(\delta_r x) = \delta_r \mathrm{Pr}_j(x)$
for all $x \in \mathbb{G}$, $r > 0$.

\item
For all $x, y \in \mathbb{G}$
the identity holds
\[
  \mathrm{Pr}_j(x \cdot y) =
  \mathrm{Pr}_j(x) \cdot
  \exp(x_j X_j) \cdot
  \mathrm{Pr}_j(y) \cdot
  \exp(- x_j X_j).
\]

\end{enumerate}

\end{lemma}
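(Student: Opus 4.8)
The plan is to work throughout from the defining relation of the projection. By Definition~\ref{DefProjection} every $x$ splits uniquely as $x = \mathrm{Pr}_j(x) \cdot \exp(x_j X_j)$ with $\mathrm{Pr}_j(x) \in \Pi_j$, and since $L_j$ is abelian this is equivalent to the explicit formula $\mathrm{Pr}_j(x) = x \cdot \exp(-x_j X_j)$. The structural facts I intend to invoke are that $\delta_r$ is a group automorphism, that $\Pi_j$ is a normal subgroup (Lemma~\ref{LemmaSubgroups}), that the $j$-th coordinate is additive, $(x\cdot y)_j = x_j + y_j$ for $j \le n_1$ by~\eqref{EqGroupMultCoords}, and the uniqueness of the decomposition $\mathbb{G} = \Pi_j L_j$. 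In both parts the strategy is the same: produce a factorisation of the relevant point into a $\Pi_j$-factor times an $L_j$-factor, and then read off the projection by uniqueness.

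For part~1, I first note that since $X_j$ is horizontal ($\sigma_j = 1$), the dilation acts on the $j$-th coordinate simply as $(\delta_r x)_j = r x_j$, so that $\delta_r \exp(s X_j) = \exp(r s X_j)$. Applying $\delta_r$ to $x = \mathrm{Pr}_j(x)\exp(x_j X_j)$ and using that $\delta_r$ is a homomorphism gives $\delta_r x = \delta_r\mathrm{Pr}_j(x) \cdot \exp(r x_j X_j)$. Because $\delta_r$ preserves the set $\Pi_j = \{x_j = 0\}$, the factor $\delta_r\mathrm{Pr}_j(x)$ again lies in $\Pi_j$, while $r x_j = (\delta_r x)_j$, so this is precisely the $\Pi_j L_j$-splitting of $\delta_r x$. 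Reading off the $\Pi_j$-component yields $\mathrm{Pr}_j(\delta_r x) = \delta_r \mathrm{Pr}_j(x)$.

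For part~2, I substitute the decompositions of $x$ and $y$ into the product and insert $\exp(-x_j X_j)\exp(x_j X_j) = e$ to rearrange:
\[
x \cdot y = \bigl[\mathrm{Pr}_j(x)\exp(x_j X_j)\,\mathrm{Pr}_j(y)\exp(-x_j X_j)\bigr]\cdot \exp(x_j X_j)\exp(y_j X_j).
\]
Since $L_j$ is abelian, the last two factors combine to $\exp((x_j + y_j)X_j) = \exp((x\cdot y)_j X_j)$ by additivity of the horizontal coordinate. It then remains to check that the bracketed prefix $p := \mathrm{Pr}_j(x)\exp(x_j X_j)\,\mathrm{Pr}_j(y)\exp(-x_j X_j)$ lies in $\Pi_j$: indeed $\exp(x_j X_j)\,\mathrm{Pr}_j(y)\exp(-x_j X_j)$ is a conjugate of $\mathrm{Pr}_j(y) \in \Pi_j$, hence lies in $\Pi_j$ by normality, and multiplying on the left by $\mathrm{Pr}_j(x) \in \Pi_j$ keeps it in the subgroup. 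Thus $x\cdot y = p\cdot\exp((x\cdot y)_jX_j)$ is the $\Pi_j L_j$-decomposition of $x\cdot y$, and uniqueness gives $\mathrm{Pr}_j(x\cdot y) = p$, which is the asserted formula.

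Neither step is genuinely difficult; both reduce to manipulating the defining decomposition and invoking its uniqueness. The only place where care is needed — and the only place where the conclusions of Lemma~\ref{LemmaSubgroups} are actually used — is verifying that the candidate $\Pi_j$-components really belong to $\Pi_j$: the invariance of $\{x_j = 0\}$ under $\delta_r$ in part~1, and the normality of $\Pi_j$ (so that conjugation by $\exp(x_j X_j)$ preserves it) in part~2. Once membership in $\Pi_j$ is secured, uniqueness of the semidirect splitting finishes both identities.
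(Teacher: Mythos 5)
Your proof is correct: both identities follow exactly as you argue, by exhibiting the $\Pi_j L_j$-factorisation of $\delta_r x$ (resp.\ $x\cdot y$) and invoking uniqueness of the semidirect splitting, with normality of $\Pi_j$ justifying membership of the conjugated factor. The paper omits the proof entirely (``easy to check from the definition''), and your argument is precisely the standard verification it has in mind.
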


In what follows we also use the following analytical properties
of projections.

\begin{lemma}
\label{LemmaProjJacobian}
$1)$
Let $s \in \mathbb{R}$. The mappings
$\Pi_{s,j} \to \Pi_j$ defined as
$x \mapsto \exp(-sX_j) x$,
$x \mapsto x \exp(-sX_j)$ and
$x \mapsto \mathrm{Pr}_j(x)$
are diffeomorphisms of $\Pi_{s,j}$ onto $\Pi_j$
and have a Jacobian of $1$.

$2)$ The Jacobi matrix of the mapping $\mathrm{Pr}_j$
in the basis $X_j, X_1, \ldots, X_{j-1}, X_{j+1}, \ldots, X_N$
(i.\,e.\ considering $x_j$ as the first coordinate)
has the form
\[
  D\mathrm{Pr}_j(x) =
  \begin{pmatrix}
  0 & T_j(x)
  \end{pmatrix},
\]
where $\det T_j(x) = 1$.
\end{lemma}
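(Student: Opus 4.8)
The plan is to deduce everything from two observations: on each slice $\mathrm{Pr}_j$ is a group translation, and differentiating along the \emph{left-invariant} fields $X_m$ (rather than the coordinate fields $\partial_{x_m}$) turns the projection into the adjoint action of $L_j$. For part~1 I would first record the identity $\mathrm{Pr}_j(x)=x\cdot\exp(-x_jX_j)$, immediate from the defining decomposition $x=\mathrm{Pr}_j(x)\cdot\exp(x_jX_j)$. Restricted to $\Pi_{s,j}$ (where $x_j\equiv s$) this is exactly the right translation $x\mapsto x\cdot\exp(-sX_j)$, so among the three maps only the two translations $x\mapsto\exp(-sX_j)\cdot x$ and $x\mapsto x\cdot\exp(-sX_j)$ need to be examined. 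Each is the restriction of a global diffeomorphism of $\mathbb{G}$; since the degree-$1$ coordinate transforms by $x_j\mapsto x_j-s$ (there is no polynomial correction to coordinates of degree $1$), each maps $\Pi_{s,j}$ bijectively onto $\Pi_j$. To get the Jacobian I would compute in the coordinates $(x_i)_{i\ne j}$: by~\eqref{EqGroupMultCoords} the correction polynomials $P_{ik}$ depend only on coordinates of degree $<\sigma_i$, so $\partial(x\cdot b)_i/\partial x_l=\delta_{il}+(\ast)$ with $(\ast)\ne0$ only when $\sigma_l<\sigma_i$. Thus the Jacobi matrix is triangular with respect to the degree ordering and has $1$'s on the diagonal; deleting the (degree-$1$) row and column $j$ preserves this structure, so the determinant is $1$. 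The same computation handles left translation.

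For part~2 I first note that for $i\ne j$ the field $X_i$ has vanishing $j$-th component everywhere --- indeed $(x\cdot y)_j=x_j+y_j$ forces $X_i(x)_j=\delta_{ij}$ --- so $X_i$ is tangent to every $\Pi_{s,j}$ and $\{X_i(p)\}_{i\ne j}$ is a frame on $\Pi_j$; this is the frame in which $T_j$ is written. The vanishing of the first column is the flow-invariance of $\mathrm{Pr}_j$ along $X_j$: from $\exp(x_jX_j)\exp(tX_j)=\exp((x_j+t)X_j)$ and uniqueness of the decomposition one gets $\mathrm{Pr}_j(x\cdot\exp(tX_j))=\mathrm{Pr}_j(x)$, whence $D\mathrm{Pr}_j(x)[X_j]=\frac{d}{dt}\big|_{t=0}\mathrm{Pr}_j(x\exp(tX_j))=0$.

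For the remaining columns, the hard part --- and the crux of the whole lemma --- is to linearize $\mathrm{Pr}_j$ along $X_m$, $m\ne j$. Here I would invoke property~(2) of Lemma~\ref{ProjProperties}: since $\exp(tX_m)\in\Pi_j$ we have $\mathrm{Pr}_j(\exp(tX_m))=\exp(tX_m)$, so with $p=\mathrm{Pr}_j(x)$ and $a=\exp(x_jX_j)$,
\[
  \mathrm{Pr}_j(x\cdot\exp(tX_m))
  = p\cdot a\exp(tX_m)a^{-1}
  = p\cdot\exp\bigl(t\,\mathrm{Ad}_aX_m\bigr).
\]
Differentiating at $t=0$ gives $D\mathrm{Pr}_j(x)[X_m]=(\mathrm{Ad}_aX_m)(p)$. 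Since $\mathrm{Ad}_a=e^{x_j\,\mathrm{ad}_{X_j}}$ and $\mathrm{ad}_{X_j}$ raises the grading by one ($[V_1,V_k]\subset V_{k+1}$), one has $\mathrm{Ad}_aX_m=X_m+(\text{terms of degree}>\sigma_m)$; in particular the $X_j$-component vanishes (degree $1$ is minimal and $m\ne j$), so $\mathrm{Ad}_a$ preserves $W=\operatorname{span}\{X_i:i\ne j\}$ and $D\mathrm{Pr}_j(x)[X_m]\in T_p\Pi_j$. Read in the degree-ordered frame $\{X_i\}_{i\ne j}$, the block $T_j$ is precisely the matrix of $\mathrm{Ad}_a|_W$, which is triangular with $1$'s on the diagonal; hence $\det T_j=1$. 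I expect the only delicate point to be the bookkeeping that identifies $D\mathrm{Pr}_j$ on the complement with $\mathrm{Ad}_{\exp(x_jX_j)}$ and confirms its unipotence; once that is in place both determinants are immediate.
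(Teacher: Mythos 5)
Your proposal is correct, and for both determinant computations it takes a genuinely different route from the paper. In part 1 the paper, after the same identification $\mathrm{Pr}_j|_{\Pi_{s,j}} = r_{-s,j}|_{\Pi_{s,j}}$, obtains $\det DT_{s,j}=1$ from the bi-invariance of Lebesgue measure on the nilpotent group: $\det Dr_{s,j}=1$ globally, the $X_j$-direction is preserved exactly by the right shift, so the determinant factors onto the hyperplane. You instead read the unipotent triangular structure of $D(x\mapsto x\cdot b)$ directly off the polynomial group law~\eqref{EqGroupMultCoords}; both arguments are valid, yours being more explicit and the paper's avoiding coordinates. In part 2 the paper merely observes $\mathrm{Pr}_j|_{\Pi_{s,j}} = T_{s,j}^{-1}$ and reuses part 1, whereas you compute $T_j$ explicitly as the matrix of $\mathrm{Ad}_{\exp(x_jX_j)}$ restricted to $W=\mathrm{span}\{X_i: i\ne j\}$ via Lemma~\ref{ProjProperties}; this in fact yields slightly more than the paper proves (an explicit formula for $T_j$ and its unipotence, not merely $\det T_j=1$), and the identification $D\mathrm{Pr}_j(x)\langle X_m\rangle=(\mathrm{Ad}_{\exp(x_jX_j)}X_m)(p)$ together with the degree-raising property of $\mathrm{ad}_{X_j}$ is carried out correctly. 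The vanishing of the $X_j$-column is handled identically in both proofs. The one piece of bookkeeping you should make explicit is that your part 1 computation is in the coordinate frame $\{\partial_{x_i}\}_{i\ne j}$ while the lemma (and your part 2) uses the left-invariant frame $\{X_i\}_{i\ne j}$; since $X_i = \partial_{x_i} + \sum_{\sigma_l>\sigma_i}a_{li}(x)\,\partial_{x_l}$, the two frames differ on the tangent spaces of the hyperplanes by a unipotent matrix, so the determinants agree and no gap results.
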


\begin{proof}
The restrictions of left and right shifts
$r_{s,j}(x) = x \cdot \exp(s X_j)$,
$l_{s,j}(x) = \exp(s X_j) \cdot x$ on the hyperplane
$\Pi_j$ are diffeomorphisms $\Pi_j \to \Pi_{s,j}$.
For the left shift we have
$(X_k l_{s,j})(x) = X_k (l_{s,j}(x))$,
$k = 1, \ldots, N$,
due to the left-invariance of
$X_1, \ldots, X_N$. Consequently, $Dl_{s,j} = \mathrm{Id}$.
For the right-shift on $\exp(s X_j)$ we have
\[
  (X_j r_{s,j})(x)
  = \frac{d}{dt} x \cdot \exp(s X_j) \cdot \exp(t X_j) \Big|_{t = 0}
  = \frac{d}{dt} x \cdot \exp((s + t)X_j) \Big|_{t = 0}
  = X_j (r_{s,j}(x)).
\]
Besides, $X_k r_{s,j}(x) \in T \Pi_{s,j}$
for $k \ne j$ and $x \in \Pi_{j}$.
Therefore, the Jacobi matrix of the mapping
$r_{s,j}$ in the basis
$X_j, X_1, \ldots, X_{j-1}, X_{j+1}, \ldots, X_N$
has the form
\[
  D r_{s,j}(x) =
  \begin{pmatrix}
  1 & 0 \\
  0 & DT_{s,j}(x)
  \end{pmatrix},
  \qquad
  x \in \Pi_{j},
\]
where $T_{s,j} : \Pi_j \to \Pi_{s,j}$ is the restriction of
$r_{s,j}$.
Since the Lebesgue measure $\mathcal{L}^N$ is right-invariant
on $\mathbb{G}$,
$\det DT_{s,j} = \det Dr_{s,j} = 1$.

For the mapping $\mathrm{Pr}_j|_{\Pi_{s,j}}$ it is enough
to notice that $\mathrm{Pr}_j(x) = x\exp(-sX_j)$
for $x \in \Pi_{s,j}$, i.\,e. the restriction of
$\mathrm{Pr}_j$ on $\Pi_{s,j}$ coincides with the restriction
of $r_{-s,j}$ on $\Pi_{s,j}$. P.\,1) of Lemma is proved.

Finally, for the mapping
$\mathrm{Pr}_j : \mathbb{G} \to \Pi_j$ we have
\[
  X_j \mathrm{Pr}_j(x)
  = \frac{d}{dt} \mathrm{Pr}_j(x \exp(t X_j)) \big|_{t = 0}
  = \frac{d}{dt} \mathrm{Pr}_j(x) \big|_{t = 0}
  = 0.
\]
Fix $s \in \mathbb{R}$.
For $y \in \Pi_{s,j}$ we have
$\mathrm{Pr}_j(y) = y \cdot \exp(-sX_j)$.
Thus, $\mathrm{Pr}_j|_{\Pi_{s,j}} = T^{-1}_{s,j}$,
where $T_{s,j} : \Pi_j \to \Pi_{s,j}$ is defined earlier.
It follows that the Jacobi matrix of $\mathrm{Pr}_j$
in the basis $X_j, X_1, \ldots, X_{j-1}, X_{j+1}, \ldots, X_N$
is
\[
  D\mathrm{Pr}_j(x) =
  \begin{pmatrix}
  0 & DT^{-1}_{s,j}(x)
  \end{pmatrix},
\]
and $\det DT^{-1}_{s,j}(x) = 1$.
P.\,2) of Lemma is proved.
\end{proof}

An analogue of Fubini's theorem may be immediately derived
from this lemma. A partial analogue of this theorem may also
be found in~\cite{Montefalcone2005}.

\begin{theorem}[Fubini type theorem]
\label{ThFubini}
If $f \in L_1(\mathbb{G})$, then

$1)$ for $\mathcal{H}^{\nu-1}$-a.\,e. $p \in \Pi_j$
the mapping $t \mapsto f(p \exp(t X_j))$ is integrable;

$2)$ for a.\,e. $t \in \mathbb{R}$
the mapping $p \mapsto f(p \exp(t X_j))$
$\mathcal{H}^{\nu-1}$-integrable;

$3)$ the following formula holds
\[
  \intop_{\Pi_j} d\mathcal{H}^{\nu-1}(p)
  \intop_{\mathbb{R}}
  f(p \exp(t X_j)) \, dt
  = \intop_{\mathbb{R}} dt \intop_{\Pi_j}
  f(p \exp(t X_j)) \, d\mathcal{H}^{\nu-1}(p)
  = \frac{\Theta_{\Pi_j}}{\Theta_\mathbb{G}}
    \intop_{\mathbb{G}} f(x) \, d\mathcal{H}^\nu(x).
\]
\end{theorem}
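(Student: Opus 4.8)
The plan is to reduce this Fubini-type statement to the ordinary Fubini theorem on $\mathbb{R}^N$ by exploiting the coordinate structure already established in Lemma~\ref{LemmaProjJacobian}. First I would recall that $\mathcal{H}^\nu = \Theta_\mathbb{G}\,\mathcal{L}^N$ on $\mathbb{G}$ (Lemma~\ref{HausProps}) and $\mathcal{H}^{\nu-1} = \Theta_{\Pi_j}\,\mathcal{L}^{N-1}$ on $\Pi_j$ (Lemma~\ref{LemmaSubgroups}), so that the whole statement can be rephrased entirely in terms of Lebesgue measures, with the constant $\Theta_{\Pi_j}/\Theta_\mathbb{G}$ emerging as the ratio of the two normalizations. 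Thus it suffices to prove
\[
  \intop_{\Pi_j} d\mathcal{L}^{N-1}(p)
  \intop_{\mathbb{R}} f(p\exp(tX_j))\,dt
  = \intop_{\mathbb{R}} dt \intop_{\Pi_j} f(p\exp(tX_j))\,d\mathcal{L}^{N-1}(p)
  = \intop_{\mathbb{G}} f(x)\,d\mathcal{L}^N(x),
\]
and then multiply through by the appropriate constants.

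The key step is to identify the map $(p,t)\mapsto p\exp(tX_j)$ as a change of variables whose Jacobian is identically $1$. Concretely, I would use the splitting $\mathbb{G} = \Pi_j L_j$ from Definition~\ref{DefProjection}: every $x\in\mathbb{G}$ is written uniquely as $x = p\exp(x_jX_j)$ with $p = \mathrm{Pr}_j(x)\in\Pi_j$ and $x_j = \mathrm{pr}_j(x)$. In the reordered coordinate system $(x_j, x_1,\ldots,x_{j-1},x_{j+1},\ldots,x_N)$, the inverse map $x\mapsto(\mathrm{pr}_j(x),\mathrm{Pr}_j(x))$ is exactly the one analyzed in Part~2 of Lemma~\ref{LemmaProjJacobian}, where its Jacobi matrix was computed to have block form with $\det = 1$. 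Hence the parametrization $\Phi:(t,p)\mapsto p\exp(tX_j)$ is a global diffeomorphism $\mathbb{R}\times\Pi_j \to \mathbb{G}$ with unit Jacobian, and the pushforward of $\mathcal{L}^1\otimes\mathcal{L}^{N-1}$ under $\Phi$ is precisely $\mathcal{L}^N$.

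With this change of variables in hand, the three conclusions follow from the classical Fubini--Tonelli theorem applied to $f\circ\Phi \in L_1(\mathbb{R}\times\Pi_j)$: integrability of the slice $t\mapsto f(p\exp(tX_j))$ for $\mathcal{H}^{\nu-1}$-a.e.\ $p$ is Part~1, integrability of $p\mapsto f(p\exp(tX_j))$ for a.e.\ $t$ is Part~2, and the equality of iterated integrals together with the identity $\int_{\mathbb{R}\times\Pi_j} f\circ\Phi\,d(\mathcal{L}^1\otimes\mathcal{L}^{N-1}) = \int_\mathbb{G} f\,d\mathcal{L}^N$ is Part~3. Translating back through $\Theta_\mathbb{G}$ and $\Theta_{\Pi_j}$ produces the stated constant.

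I expect the only real point requiring care — rather than a genuine obstacle — to be the verification that the measure-theoretic pushforward is correct, i.e.\ that the unit-Jacobian statement of Lemma~\ref{LemmaProjJacobian} indeed yields $\Phi_*(\mathcal{L}^1\otimes\mathcal{L}^{N-1}) = \mathcal{L}^N$ with no extra factor. This hinges on checking that the Lebesgue measure on $\Pi_j$ used in Lemma~\ref{LemmaSubgroups} is normalized compatibly with the coordinate restriction, and that the ordering of coordinates in the Jacobian computation does not introduce a sign or scaling discrepancy. Once that bookkeeping is settled, the result is immediate from the smooth coarea/change-of-variables formula and Fubini on Euclidean space.
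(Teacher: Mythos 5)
Your proposal is correct, and it reaches the result by a slightly different mechanism than the paper. The paper splits the work in two: it obtains the iterated integral in the order ``$p$ first, then along the fiber'' by applying the Riemannian coarea formula \eqref{EqEuclideanCoarea} to the smooth submersion $\mathrm{Pr}_j$ (using $J(x,\mathrm{Pr}_j)=1$ from Lemma~\ref{LemmaProjJacobian} and the identification of $\mathcal{H}^1_{\mathrm{riem}}$ with $dt$ on the coset $pL_j$ from Lemma~\ref{LemmaSubgroups}), and it obtains the other order by classical Fubini in the $x_j$-direction followed by the unit-Jacobian change of variable $\Pi_{t,j}\to\Pi_j$. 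You instead package everything into the single global diffeomorphism $\Phi:(t,p)\mapsto p\exp(tX_j)$ with $\Phi_*(\mathcal{L}^1\otimes\mathcal{L}^{N-1})=\mathcal{L}^N$ and invoke product Fubini--Tonelli once; this avoids the Riemannian coarea formula altogether and yields both orders of integration simultaneously, at the cost of one small extra verification you correctly flag: Lemma~\ref{LemmaProjJacobian}(2) only computes $D\mathrm{Pr}_j$, so you must supplement it with the observation that $X_k\,\mathrm{pr}_j=\delta_{jk}$ (immediate from $(x\cdot y)_j=x_j+y_j$ for $j\le n_1$), which makes the full Jacobi matrix of $x\mapsto(\mathrm{pr}_j(x),\mathrm{Pr}_j(x))$ block-triangular with determinant $\det T_j=1$. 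One further bookkeeping point worth stating explicitly in either approach is that determinants computed in the left-invariant frame $X_1,\dots,X_N$ agree with those in the coordinate frame, because in canonical coordinates of the first kind the transition matrix is unipotent; with that noted, your argument is complete and, if anything, more elementary than the paper's.
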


\begin{proof}
Applying the Riemannian coarea formula~\eqref{EqEuclideanCoarea}
to the smooth mapping $\mathrm{Pr}_j$ we obtain
\[
  \intop_{\Pi_j} dp
    \intop_{\mathrm{Pr}_j^{-1}(p)}
    f(y) \, d\mathcal{H}_{\mathrm{riem}}^1(y)
  =
  \intop_\mathbb{G} f(x) J(x, \mathrm{Pr}_j) \, dx.
\]
Wherein $J(x, \mathrm{Pr}_j)
= \sqrt{\det D\mathrm{Pr}_j(x) D\mathrm{Pr}_j^*(x)} = 1$
by p.\,2 of Lemma~\ref{LemmaProjJacobian}.
Since $\mathrm{Pr}_j^{-1}(p) = p L_j$,
by Lemma~\ref{LemmaSubgroups} and left-invariance of
$\mathcal{H}^1_{\mathrm{riem}}$ it follows
\[
  \intop_{\mathrm{Pr}_j^{-1}(p)}
  f(y) \, d\mathcal{H}_{\mathrm{riem}}^1(y)
  = \intop_{L_j}
  f(ph) \, d\mathcal{H}^1_{\mathrm{riem}}(h)
  = \intop_{\mathbb{R}}
  f(p\exp(tX_j)) \, dt.
\]
On the other hand, by Fubini's theorem
\[
  \intop_{\mathbb{G}} f(x) \, dx =
  \intop_{\mathbb{R}} dt
  \intop_{\Pi_{t,j}} f(z) \, dz.
\]
Since by p.\,1 of Lemma~\ref{LemmaProjJacobian}
the Jacobian of the change of variable $z = p \exp(t X_j)$
is equal to $1$, we have
\[
  \intop_{\Pi_{t,j}} f(z) \, dz
  = \intop_{\Pi_j} f(p\exp(t X_j)) \, dp.
\]
Thus, we obtain
\[
  \intop_{\Pi_j} dp \intop_{\mathbb{R}}
  f(p \exp(t X_j)) \, dt
  = \intop_{\mathbb{G}} f(x) \, dx
  = \intop_{\mathbb{R}} dt \intop_{\Pi_j}
  f(p \exp(t X_j)) \, dp.
\]
The statement of Theorem follows from the proportionality of
$dp$ and $d\mathcal{H}^{\nu-1}(p)$.
\end{proof}

\section{The coarea inequality for projections}

Let $\mathbb{G}, \widetilde{\mathbb{G}}$ be two Carnot groups.
The mapping $L : \mathbb{G} \to \widetilde{\mathbb{G}}$
is a \emph{contactomorphism} if it is a homomorphism
of Lie groups that is contact, i.\,e.\
$DL \langle H \mathbb{G} \rangle \subset H \widetilde{\mathbb{G}}$. The
corresponding mapping of Lie algebras, which for brevity
we denote as
\[
  \widehat{L} = \widetilde{\log} \circ L \circ \exp
\]
is a \emph{graded homomorphism of Lie algebras},
i.\,e. it is a linear mapping such that
$\widehat{L} \langle H \mathbb{G} \rangle \subset
H \widetilde{\mathbb{G}}$ and
$L \langle [X, Y] \rangle = [L \langle X \rangle, L \langle Y \rangle]$. In a graded basis such a mapping is represented
by a block-diagonal matrix.

The mapping $\varphi : E \to \widetilde{\mathbb{G}}$,
$E \subset \mathbb{G}$, is Pansu differentiable
($\mathcal{P}$-differentiable) at $x_0 \in E$ if there is
such a contactomorphism $L$ that
\[
  \widetilde{\rho}
  \bigl( \varphi(x_0)^{-1} \varphi(x), L(x_0^{-1} x) \bigr)
  = o(\rho(x_0, x))
  \quad \text{as } x \to x_0.
\]
We denote this homemorphism ($\mathcal{P}$-differential)
as $D_\mathcal{P} f(x_0)$ and the corresponding homemorphism
of Lie algebras as $\widehat{D} f(x_0)$.

Lipschitz continuous mappings of Carnot groups are
$\mathcal{P}$-differentiable a.\,e. In particular they are
contact. This is proved in~\cite{Pa1989} for mappings
defined on open sets and in~\cite{VodUkh1996} for mappings
defined on measurable sets.

For a linear mapping
$\widehat{L} : \mathfrak{g} \to \mathfrak{g}$
of Lie algrebra $\mathfrak{g}$ as
$\mathop{\mathrm{adj}} \widehat{L}$ we denote the adjoint
linear mapping. Its matrix is a transposed matrix of cofactors
and its main property is
\[
  \widehat{L} \circ \mathop{\mathrm{adj}} \widehat{L}
  = \mathop{\mathrm{adj}} \widehat{L} \circ \widehat{L}
  = \det \widehat{L} \cdot \mathrm{Id}.
\]
Note, that if $\widehat{L}$ is a graded homomorphism we also
have
$\mathop{\mathrm{adj}} \widehat{L} \langle H \mathbb{G}
\rangle \subset H \mathbb{G}$ due to block-diagonal structure
but $\mathop{\mathrm{adj}} \widehat{L}$ is generally not
a graded homomorphism.

The main result of this paper is the following

\begin{theorem}[coarea inequality]
\label{ThCoareaIneq}
Let $E \subseteq \mathbb{G}$,
$\varphi : E \to \mathbb{G}$ be Lipschitz continuous.
Then for every measurable $A \subseteq E$ the mapping
\[
  p \mapsto \mathcal{H}^1
  \big( (\mathrm{Pr}_j \circ \varphi)^{-1}(p) \cap A
  \big),
  \qquad
  p \in \Pi_j,
\]
is measurable and
\[
  \intop_{\Pi_j} \mathcal{H}^1
  \big( (\mathrm{Pr}_j \circ \varphi)^{-1}(p) \cap A
  \big) \, d\mathcal{H}^{\nu-1}(p)
  \le \frac{\Theta_{\Pi_j}}{\Theta_\mathbb{G}}
  \int\limits_A
  \bigl| \mathop{\mathrm{adj}} \widehat{D}\varphi(x) \langle X_j \rangle \bigr|
  \, d\mathcal{H}^\nu(x),
\]
where
$\Theta_{\mathbb{G}}$, $\Theta_{\Pi_j}$ are the constants of
proportionality of Hausdorff and Lebesgue measures, namely
$\mathcal{H}^\nu = \Theta_{\mathbb{G}} \mathcal{L}^N$
on $\mathbb{G}$,
$\mathcal{H}^{\nu-1} = \Theta_{\Pi_j} \mathcal{L}^{N-1}$
on $\Pi_j$.
\end{theorem}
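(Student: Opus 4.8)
The plan is to turn the statement into a differentiation problem and then resolve it by blowing $\varphi$ up to its Pansu differential. Write $\psi = \mathrm{Pr}_j \circ \varphi$ and introduce the set function
\[
  \mu(A) = \intop_{\Pi_j} \mathcal{H}^1\bigl( \psi^{-1}(p) \cap A \bigr) \, d\mathcal{H}^{\nu-1}(p), \qquad A \subseteq E \text{ measurable}.
\]
First I would check that $p \mapsto \mathcal{H}^1(\psi^{-1}(p) \cap A)$ is measurable and that $\mu$ is a Borel measure. For compact $A$ the $\varepsilon$-measures $\mathcal{H}^1_\varepsilon(\psi^{-1}(p) \cap A)$ depend measurably on $p$ and converge monotonically by Lemma~\ref{EpsHausProps}; the general measurable case follows by regularity of $\mathcal{H}^\nu$. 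The essential analytic input at this stage is the Eilenberg-type bound of Lemma~\ref{LemmaCoareaAC}, which yields a crude inequality $\mu(A) \le C\,\mathcal{H}^\nu(A)$. The reason such a bound survives although $\mathrm{Pr}_j$ is non-contact is that $\psi^{-1}(p) = \varphi^{-1}(p L_j)$, so the bad metric behaviour is absorbed into the Lipschitz map $\varphi$, while $\mathrm{Pr}_j$ enters only through its measure-preserving Fubini structure of Theorem~\ref{ThFubini}. In particular $\mu \ll \mathcal{H}^\nu$.

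By the Radon--Nikodym theorem together with Lebesgue--Besicovitch differentiation (legitimate since $\mathcal{H}^\nu$ is doubling by homogeneity, cf.\ Lemma~\ref{HausProps}) I may write $\mu = g\,\mathcal{H}^\nu$ with
\[
  g(x_0) = \lim_{r \to 0} \frac{\mu(B_r(x_0))}{\mathcal{H}^\nu(B_r(x_0))} \quad \text{for } \mathcal{H}^\nu\text{-a.e. } x_0.
\]
It then suffices to prove the pointwise estimate $g(x_0) \le \frac{\Theta_{\Pi_j}}{\Theta_\mathbb{G}} \bigl| \mathop{\mathrm{adj}} \widehat{D}\varphi(x_0) \langle X_j \rangle \bigr|$ at every point $x_0$ of $\mathcal{P}$-differentiability of $\varphi$, because integrating this density over $A$ reproduces the asserted inequality. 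Fix such an $x_0$ and set $L = D_\mathcal{P}\varphi(x_0)$. Using Lemma~\ref{ProjProperties}(2) to extract the left factor $\varphi(x_0)$ and the conjugating inner automorphism — both of which preserve $\mathcal{H}^{\nu-1}$ on $\Pi_j$ and the fibrewise $\mathcal{H}^1$ — the definition of the $\mathcal{P}$-differential allows me, after the dilation $\delta_{1/r}$, to compare the fibres of $\psi$ inside $B_r(x_0)$ with those of the homogeneous model $\mathrm{Pr}_j \circ L$ up to an error that vanishes as $r \to 0$.

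The model computation is where the factor is produced. When $\widehat{L}$ is invertible the fibre $(\mathrm{Pr}_j \circ L)^{-1}(p)$ is a coset of the horizontal one-parameter subgroup $\exp(\mathbb{R}\,\widehat{L}^{-1} X_j)$, so on it the sub-Riemannian $\mathcal{H}^1$ coincides with the Euclidean length (the scalar product being orthonormal on $V_1$). Applying the Euclidean coarea formula~\eqref{EqEuclideanCoarea} to the smooth map $\mathrm{Pr}_j \circ L$, using $\det T_j = 1$ from Lemma~\ref{LemmaProjJacobian} and the Cauchy--Binet identity, gives exactly
\[
  \intop_{\Pi_j} \mathcal{H}^1 \bigl( (\mathrm{Pr}_j \circ L)^{-1}(p) \cap B \bigr) \, d\mathcal{H}^{\nu-1}(p) = \frac{\Theta_{\Pi_j}}{\Theta_\mathbb{G}} \bigl| \mathop{\mathrm{adj}} \widehat{L} \langle X_j \rangle \bigr| \, \mathcal{H}^\nu(B).
\]
Transporting this through the blow-up yields $g(x_0) \le \frac{\Theta_{\Pi_j}}{\Theta_\mathbb{G}} | \mathop{\mathrm{adj}} \widehat{L} \langle X_j \rangle |$, with equality precisely when the fibres are stable in the limit; this is what makes the two corollaries (finite codistortion and $\mathbb{H}^1$) sharp.

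I expect this last passage to the limit to be the main obstacle. Since $\mathcal{H}^1$ of the one-dimensional fibres is only lower semicontinuous under the set convergence generated by the blow-up, and since the degenerate directions where $\det \widehat{L} = 0$ while $\mathop{\mathrm{adj}} \widehat{L} \langle X_j \rangle \ne 0$ must be treated separately, one can in general control the true fibre length only from above by that of the model. This is exactly why Theorem~\ref{ThCoareaIneq} is an inequality and not an identity, and it is here that I would carefully adapt the covering and differentiation scheme of~\cite{Magn2002}, invoking the crude Eilenberg bound of Lemma~\ref{LemmaCoareaAC} to dominate both the region where $\varphi$ is poorly approximated by $L$ and the degenerate set.
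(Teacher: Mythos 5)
Your overall architecture coincides with the paper's: an Eilenberg-type bound (Lemma~\ref{LemmaCoareaAC}) gives absolute continuity of the coarea measure, a blow-up at a point of $\mathcal{P}$-differentiability reduces the density estimate to the linear model $\mathrm{Pr}_j \circ L$, and the model is computed via the Euclidean coarea formula, Lemma~\ref{LemmaProjJacobian} and Cauchy--Binet. The measurability argument and the reduction of the theorem to a pointwise density bound integrated over level sets of $\mathcal{J}$ are also essentially what the paper does (the paper uses a Vitali covering on the sets $\{\mathcal{J} < c\}$ rather than the full Radon--Nikodym/Lebesgue differentiation machinery, which sidesteps any worry about Besicovitch-type covering theorems in Carnot groups, but that is a matter of packaging).

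There is, however, a genuine gap at the decisive step, and your own diagnosis of it points in the wrong direction. After rescaling, the fibre of $\mathrm{Pr}_j\circ\varphi$ inside $\overline{B_r}(x_0)$ is only known to be \emph{contained} in the compact set $K_r(w)=\{\xi\in\overline{B_1} : \mathrm{dist}(L(\xi),\mathrm{Pr}_j^{-1}(w))\le\beta(r)\}$, a shrinking neighbourhood of the model fibre. To bound the density from above you need $\limsup_{r\to0}\mathcal{H}^1(K_r(w))\le\mathcal{H}^1\bigl(\bigcap_r K_r(w)\bigr)$, i.e.\ \emph{upper} semicontinuity of $\mathcal{H}^1$ along decreasing compacts --- and this fails badly: $K_r(w)$ generically has nonempty interior, so $\mathcal{H}^1(K_r(w))=+\infty$ for every $r>0$ while the limit set is a curve of finite length. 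Lower semicontinuity, which you invoke, gives an inequality in the useless direction. The paper's remedy is to run the entire density estimate with the premeasures $\mathcal{H}^1_\varepsilon$ (i.e.\ with $\mu_{\varepsilon,j}$ in place of $\mu_j$): by Lemma~\ref{EpsHausProps} these are finite on bounded sets and continuous from above on decreasing compacts, which makes the dominated-convergence passage to the model legitimate; the final bound $\mathcal{H}^1_\varepsilon\le\mathcal{H}^1$ on the model fibre then goes the right way, and $\varepsilon\to0$ is taken only \emph{after} the covering argument, using the monotonicity of $\mu_{\varepsilon,j}$ in $\varepsilon$. Without introducing the $\varepsilon$-measures (or an equivalent device) your differentiation of $\mu$ itself cannot be closed; note in particular that the paper never proves the density bound for $\mu_j$, only for each $\mu_{\varepsilon,j}$.
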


\begin{definition}
Let $E \subseteq \mathbb{G}$, $x \in E$ be a point of
$\mathcal{P}$-differentiability of the mapping
$\varphi : E \to \mathbb{G}$.
Call the \emph{sub-Riemannian Jacobian} of the mapping
$\mathrm{Pr}_j \circ \varphi$ at $x$ the value
\[
  \mathcal{J}(x, \mathrm{Pr}_j \circ \varphi)
  =
  \frac{\Theta_{\Pi_j}}{\Theta_\mathbb{G}}
  \bigl| \mathop{\mathrm{adj}}
  \widehat{D}\varphi(x) \langle X_j \rangle \bigr|.
\]
\end{definition}

The reasonableness of such a definition follows from the
following lemma.

\begin{lemma}
Let $L$ be a contactomorphism of the group $\mathbb{G}$, then
\[
\frac{1}{\mathcal{H}^\nu(\overline{B}_1)}
  \int\limits_{\Pi_j}
  \mathcal{H}^1 \bigl(
  (\mathrm{Pr}_j \circ L)^{-1}(y) \cap \overline{B}_1 \bigr)
  \, d\mathcal{H}^{\nu-1}(y)
  =
  \frac{\Theta_{\Pi_j}}{\Theta_\mathbb{G}}
  \bigl| \mathop{\mathrm{adj}} \widehat{L} \langle X_j \rangle \bigr|.
\]
\end{lemma}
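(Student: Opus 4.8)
The plan is to reduce the statement to the classical coarea formula~\eqref{EqEuclideanCoarea}, in its Riemannian form (exactly as in the proof of Theorem~\ref{ThFubini}), applied to the smooth map $g = \mathrm{Pr}_j \circ L : \mathbb{G} \to \Pi_j$, and then to compute its coarea factor explicitly. Identifying $\Pi_j$ with $\mathbb{R}^{N-1}$ through the coordinates $(x_i)_{i \neq j}$, the Riemannian coarea formula gives
\[
  \intop_{\Pi_j}
  \mathcal{H}^1_{\mathrm{riem}}\bigl( g^{-1}(y) \cap \overline{B}_1 \bigr)
  \, dy
  = \intop_{\overline{B}_1} J(x, g) \, dx,
  \qquad
  J(x,g) = \sqrt{\det Dg(x) Dg(x)^*},
\]
where the fibres carry the \emph{Riemannian} length $\mathcal{H}^1_{\mathrm{riem}}$ and $dy = \mathcal{L}^{N-1}$. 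Thus the first task is to show that $J(x,g)$ is the constant $|\mathop{\mathrm{adj}}\widehat{L}\langle X_j\rangle|$, and the second, more delicate task is to replace $\mathcal{H}^1_{\mathrm{riem}}$ by the sub-Riemannian $\mathcal{H}^1$ on the fibres.

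For the Jacobian I would work in the left-invariant orthonormal frame $X_1,\dots,X_N$. Since $L$ is a homomorphism, $DL(x)$ is represented in this frame by the constant matrix $\widehat{L}$, while by Lemma~\ref{LemmaProjJacobian} the matrix of $D\mathrm{Pr}_j$ in the reordered frame $X_j,X_1,\dots,X_{j-1},X_{j+1},\dots,X_N$ has a vanishing first column and an $(N-1)\times(N-1)$ block $T_j$ with $\det T_j = 1$. Multiplying, the zero column absorbs the $j$-th row of $\widehat{L}$, so $Dg(x) = T_j(L(x))\,\widehat{L}^{\setminus j}$, where $\widehat{L}^{\setminus j}$ is $\widehat{L}$ with its $j$-th row deleted. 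Hence, by the Cauchy--Binet formula,
\[
  J(x,g) = |\det T_j|\,
  \sqrt{\det\bigl(\widehat{L}^{\setminus j}(\widehat{L}^{\setminus j})^{*}\bigr)}
  = \sqrt{\textstyle\sum_{k} M_{jk}^2}
  = \bigl| \mathop{\mathrm{adj}} \widehat{L}\langle X_j\rangle \bigr|,
\]
where $M_{jk}$ is the minor of $\widehat{L}$ obtained by deleting row $j$ and column $k$; these minors are, up to sign, exactly the entries of the $j$-th column $\mathop{\mathrm{adj}}\widehat{L}\langle X_j\rangle$. In particular $J(x,g)$ is independent of $x$, and the right-hand side above equals $|\mathop{\mathrm{adj}}\widehat{L}\langle X_j\rangle|\,\mathcal{L}^N(\overline{B}_1)$.

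The main obstacle is the final step, since $\mathcal{H}^1$ and $\mathcal{H}^1_{\mathrm{riem}}$ need not coincide on an arbitrary curve. I would split according to whether $\mathop{\mathrm{adj}}\widehat{L}\langle X_j\rangle$ vanishes. If it is nonzero, then $\operatorname{rank}\widehat{L}^{\setminus j} = N-1$, so $g$ is a submersion and every nonempty fibre is a smooth $1$-manifold whose tangent space is the fixed line $\ker Dg(x) = \widehat{L}^{-1}(\mathbb{R} X_j)$; using that $\widehat{L}$ is graded (hence preserves $V_1$) and that $\mathop{\mathrm{adj}}\widehat{L}\langle X_j\rangle$ lies in $V_1$, this line is contained in $V_1$, so the fibres are horizontal and there $\mathcal{H}^1 = \mathcal{H}^1_{\mathrm{riem}}$, exactly as for $L_j$ in Lemma~\ref{LemmaSubgroups}. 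Rewriting $dy = \Theta_{\Pi_j}^{-1}\,d\mathcal{H}^{\nu-1}$ and $\mathcal{L}^N(\overline{B}_1) = \Theta_\mathbb{G}^{-1}\mathcal{H}^\nu(\overline{B}_1)$ then yields the claimed identity. If instead $\mathop{\mathrm{adj}}\widehat{L}\langle X_j\rangle = 0$, the right-hand side is $0$, while $\operatorname{rank} Dg = \operatorname{rank}\widehat{L}^{\setminus j} \le N-2$ everywhere, so by Sard's theorem $g(\mathbb{G})$ is $\mathcal{L}^{N-1}$-negligible, hence $\mathcal{H}^{\nu-1}$-negligible; the fibre is empty for $\mathcal{H}^{\nu-1}$-a.e.\ $y$ and the left-hand integral vanishes too.
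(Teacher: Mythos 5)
Your proof is correct and follows essentially the same route as the paper: the Riemannian coarea formula for the smooth map $\mathrm{Pr}_j\circ L$, the Cauchy--Binet identification of the coarea factor with $\bigl|\mathop{\mathrm{adj}}\widehat{L}\langle X_j\rangle\bigr|$, and a split on whether that vector vanishes in order to replace $\mathcal{H}^1_{\mathrm{riem}}$ by $\mathcal{H}^1$ on the fibres (which are integral curves of the horizontal left-invariant field $\mathop{\mathrm{adj}}\widehat{L}\langle X_j\rangle$ in the nondegenerate case, and a.e.\ empty otherwise). The only cosmetic difference is that you merge the paper's two nondegenerate cases ($\det\widehat{L}\ne 0$, and $\det\widehat{L}=0$ with $\mathop{\mathrm{adj}}\widehat{L}\langle X_j\rangle\ne 0$) into a single submersion/kernel argument, where the paper treats them separately.
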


\begin{proof}
Applying the Riemannian coarea formula~\eqref{EqEuclideanCoarea}
to the smooth mapping
$w \mapsto \mathrm{Pr}_j(L(w))$ on a closed ball
$\overline{B}_1$ we obtain
\begin{equation}
\label{EqRiemCoarea}
  \intop_{\Pi_j}
  \mathcal{H}^1_{\mathrm{riem}}
  \big( (\mathrm{Pr} \circ L)^{-1}(y)
  \cap \overline{B}_1  \big) \, dy
  = \intop_{\overline{B}_1}
  J(x, \mathrm{Pr}_j \circ L) \, dx,
\end{equation}
where $\mathcal{H}^1_{\mathrm{riem}}$ is a Hausdorff measure
with respect to the Riemannian metric and
\[
  J(x, \mathrm{Pr}_j \circ L)
  = \sqrt{\det D(\mathrm{Pr} \circ L) D(\mathrm{Pr} \circ L)^*}.
\]

Show that $J(x, \mathrm{Pr}_j \circ L) = \bigl|
\mathop{\mathrm{adj}} \widehat{L} \langle X_j \rangle \bigr|$.
By p.\,2 of Lemma~\ref{LemmaProjJacobian} in the basis
of vector fields
$X_j, X_1, \ldots, X_{j-1}, X_{j+1}, \ldots, X_N$ we have
\[
  D(\mathrm{Pr}_j \circ L)(x)
  =
  \begin{pmatrix}
  0 & T_j(L(x))
  \end{pmatrix} \widehat{L}
  = T_j(L(x)) \widetilde{L}_j,
\]
where $\widetilde{L}_j$ is an $(N-1) \times N$ matrix obtained
by removal of the first row (corresponding to $X_j$)
from $\widehat{L}$. From this
\[
  J(\cdot, \mathrm{Pr}_j \circ L)
  = \sqrt{\det T_j \widetilde{L}_j \widetilde{L}_j^* T_j^*}
  = \left| \det T_j \right|
    \sqrt{\det \widetilde{L}_j \widetilde{L}_j^*}.
\]
By Lemma~\ref{LemmaProjJacobian} $|\det T_j| = 1$.
By Binet--Cauchy formula
\[
  \det \widetilde{L}_j \widetilde{L}_j^*
  = \sum_{i=1}^N (\det \widetilde{L}_{ji})^2
  = \bigl|
      \mathop{\mathrm{adj}} \widehat{L} \langle X_j \rangle
    \bigr|^2,
\]
where $\widetilde{L}_{ji}$ are $(N-1) \times (N-1)$-minors
of the matrix $\widetilde{L}_j$ with $i$-th column excluded.
Note, that $\mathop{\mathrm{adj}} \widehat{L}$
is block-diagonal in the basis of $X_1, \ldots, X_N$
since $\widehat{L}$ is such. Hence,
$\mathop{\mathrm{adj}} \widehat{L} \langle X_j \rangle \in H\mathbb{G}$.

Now prove that
$\mathcal{H}^1_{\mathrm{riem}}
  \big( (\mathrm{Pr} \circ L)^{-1}(y)
  \cap \overline{B}_1 \big)
  =
  \mathcal{H}^1
  \big( (\mathrm{Pr} \circ L)^{-1}(y)
  \cap \overline{B}_1
  \big)$
for a.\,e. $y \in \Pi_j$.

\textbf{Case 1.} Let $\det \widehat{L} \ne 0$.
Then
$\widehat{L}^{-1} \langle X \rangle =
(\det \widehat{L})^{-1} \mathop{\mathrm{adj}} \widehat{L} \langle X \rangle$.
Since $\mathrm{Pr}_j^{-1}(y)$ is an integral curve of the
vector field $X_j$ passing through $(0,y)$,
its preimage $L^{-1}(\mathrm{Pr}_j^{-1}(y))$ is an integral curve
of the horizontal vector field
$\mathop{\mathrm{adj}} \widehat{L} \langle X_j \rangle$.
Therefore,
\[
  \mathcal{H}^1_{\mathrm{riem}}
  \big( (\mathrm{Pr} \circ L)^{-1}(y)
  \cap \overline{B}_1 \big)
  =
  \mathcal{H}^1
  \big( (\mathrm{Pr} \circ L)^{-1}(y)
  \cap \overline{B}_1
  \big).
\]

\textbf{Case 2.}
Let $\det \widehat L = 0$ but
$\mathop{\mathrm{adj}} \widehat{L} \langle X_j \rangle \ne 0$.
In this case $\dim \widehat{L} \langle V_1 \rangle = n_1 - 1$,
$j$-th row of the matrix $\widehat{L}$ is linearly dependent
with the rows $1, \ldots, j-1, j+1, \ldots, n_1$, and
$X_j \not\in \widehat{L} \langle V_1 \rangle$.

Note, that for every $y \in \Pi_j$ the intersection
$\mathrm{Pr}_j^{-1}(y) \cap L(\mathbb{G})$ consists of no more
than a single point. Indeed,
$L(\mathbb{G})$ is a subgroup of $\mathbb{G}$.
If $y \exp(aX_j), y \exp(b X_j) \in L(\mathbb{G})$ then
\[
  \big( y \exp(aX_j) \big)^{-1} y \exp(b X_j)
  = \exp ((b-a)X_j) \in L(\mathbb{G}).
\]
But from $\exp(X_j) \not\in L(\mathbb{G})$ it follows $a = b$.
Since
$\mathop{\mathrm{adj}} \widehat{L} \langle X_j \rangle
\in \ker \widehat{L}$ the preimage of the point
$\mathrm{Pr}_j^{-1}(y) \cap L(\mathbb{G})$
under $L$ is an integral curve of a horizontal vector field
$\mathop{\mathrm{adj}} \widehat{L} \langle X_j \rangle$.
Hence,
\[
  \mathcal{H}^1_{\mathrm{riem}}
  \big( (\mathrm{Pr} \circ L)^{-1}(y)
  \cap \overline{B}_1 \big)
  =
  \mathcal{H}^1
  \big( (\mathrm{Pr} \circ L)^{-1}(y)
  \cap \overline{B}_1
  \big).
\]

\textbf{Case 3.}
Let
$\mathop{\mathrm{adj}} \widehat{L} \langle X_j \rangle = 0$.
Then the rows $1, \ldots, j-1, j+1, \ldots, n_1$
of the matrix~$\widehat{L}$ are linearly dependent and there is
$Y \in \mathrm{span} \{ X_1, \ldots, X_{j-1}, X_{j+1}, \ldots, X_{n_1} \}$ such that $L \langle V_1 \rangle \perp Y$.
Therefore, $\mathrm{Pr}_j(L(\mathbb{G}))$ is in a subspace
of $\Pi_j$ of dimension less than $N-1$ and
$(\mathrm{Pr} \circ L)^{-1}(y)\cap \overline{B}_1 = \varnothing$
for $\mathcal{L}^{N-1}$-a.\,e.\ $y \in \Pi_j$. Therefore,
\[
  \mathcal{H}^1_{\mathrm{riem}}
  \big( (\mathrm{Pr} \circ L)^{-1}(y)
  \cap \overline{B}_1 \big)
  =
  \mathcal{H}^1 \big( (\mathrm{Pr} \circ L)^{-1}(y)\cap \overline{B}_1 \big) = 0
\]
for a.\,e.\ $y \in \Pi_j$.

Hence, the equality~\eqref{EqRiemCoarea} becomes
\[
  \intop_{\Pi_j}
  \mathcal{H}^1
  \big( (\mathrm{Pr} \circ L)^{-1}(y)
  \cap \overline{B}_1  \big) \, dy
  = \intop_{\overline{B}_1}
  \bigl|
      \mathop{\mathrm{adj}} \widehat{L} \langle X_j \rangle
    \bigr| \, dx
  = \mathcal{L}^N(\overline{B}_1) \,
  \bigl|
      \mathop{\mathrm{adj}} \widehat{L} \langle X_j \rangle
  \bigr|.
\]
The statement of Lemma now follows from the proportionality
of Lebesgue and Hausdorff measures.
\end{proof}

\begin{lemma}
\label{LemmaCoareaChange}
Let $E \subseteq \mathbb{G}$,
$\varphi : E \to \mathbb{G}$ be Lipschitz continuous
and $x_0 \in E$.
Then for every $\varepsilon > 0$
\[
  \mathop{\int^*}_{\Pi_j} \mathcal{H}^1_\varepsilon
  \big[
  (\mathrm{Pr}_j \circ \varphi)^{-1}(y) \big]
  \, d\mathcal{H}^{\nu-1}(y)
  =
  \mathop{\int^*}_{\Pi_j} \mathcal{H}^1_\varepsilon
  \big\{ x \in E :
  \mathrm{Pr}_j (\varphi(x_0)^{-1} \varphi(x)) = z
  \big\}
  \, d\mathcal{H}^{\nu-1}(z).
\]
\end{lemma}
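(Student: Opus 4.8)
The plan is to show that left-translating $\varphi$ by the fixed element $g_0 = \varphi(x_0)^{-1}$ merely relabels the fibers of $\mathrm{Pr}_j \circ \varphi$ by a measure-preserving bijection of the base $\Pi_j$, so that the integral of the fiberwise $\mathcal{H}^1_\varepsilon$-content over $\Pi_j$ is unchanged. Set $g_0 = \varphi(x_0)^{-1}$, and let $a = \mathrm{pr}_j(g_0)$ and $q = \mathrm{Pr}_j(g_0) \in \Pi_j$. Applying part 2 of Lemma~\ref{ProjProperties} to the product $g_0 \cdot \varphi(x)$ gives
\[
  \mathrm{Pr}_j\big(\varphi(x_0)^{-1}\varphi(x)\big)
  = q \cdot \exp(aX_j) \cdot \mathrm{Pr}_j(\varphi(x)) \cdot \exp(-aX_j)
  =: \Phi\big(\mathrm{Pr}_j(\varphi(x))\big),
\]
where $\Phi : \Pi_j \to \Pi_j$, $\Phi(p) = q \cdot \exp(aX_j)\, p \,\exp(-aX_j)$, is a fixed map. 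Since $\Pi_j$ is a normal subgroup (Lemma~\ref{LemmaSubgroups}), $\Phi$ is a well-defined bijection of $\Pi_j$.

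Next I would match the two fiber families. From the identity above, $\mathrm{Pr}_j(\varphi(x_0)^{-1}\varphi(x)) = z$ holds if and only if $\mathrm{Pr}_j(\varphi(x)) = \Phi^{-1}(z)$, hence
\[
  \{ x \in E : \mathrm{Pr}_j(\varphi(x_0)^{-1}\varphi(x)) = z \}
  = (\mathrm{Pr}_j \circ \varphi)^{-1}\big(\Phi^{-1}(z)\big).
\]
These are literally the same subset of $E$, so they have the same $\mathcal{H}^1_\varepsilon$-measure. Thus the integrand on the right-hand side equals $F(\Phi^{-1}(z))$, where $F(y) = \mathcal{H}^1_\varepsilon\big[(\mathrm{Pr}_j \circ \varphi)^{-1}(y)\big]$ is precisely the integrand on the left-hand side.

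The key step is to prove that $\Phi$ preserves $\mathcal{H}^{\nu-1}$ on $\Pi_j$. I would factor $\Phi = L_q \circ c$, where $c(p) = \exp(aX_j)\,p\,\exp(-aX_j)$ is conjugation and $L_q$ is left translation by $q$. Left translation by $q \in \Pi_j$ preserves $\mathcal{H}^{\nu-1}$ by the left-invariance in Lemma~\ref{HausProps}. For the conjugation I would write $c = l_a \circ r_{-a}$, with $r_{-a}(p) = p\exp(-aX_j)$ mapping $\Pi_j$ onto $\Pi_{-a,j}$ and $l_a(w) = \exp(aX_j)\,w$ mapping $\Pi_{-a,j}$ back onto $\Pi_j$; by Lemma~\ref{LemmaProjJacobian} each of these shift maps has unit Jacobian between the relevant parallel hyperplanes, so $c$ preserves the Lebesgue measure $\mathcal{L}^{N-1}$, and hence $\mathcal{H}^{\nu-1}$ by the proportionality $\mathcal{H}^{\nu-1} = \Theta_{\Pi_j}\mathcal{L}^{N-1}$ of Lemma~\ref{LemmaSubgroups}. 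Composing, $\Phi$ is an $\mathcal{H}^{\nu-1}$-measure-preserving bijection of $\Pi_j$.

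Finally I would perform the change of variable $z = \Phi(y)$ in the right-hand integral. Since $\Phi$ is a bimeasurable, measure-preserving bijection, the substitution is valid for the upper integral, so that the upper integral of $F \circ \Phi^{-1}$ equals that of $F$; this yields exactly the left-hand side and finishes the proof. The only genuinely delicate point is the measure-preservation of the conjugation $c$, together with the legitimacy of the change of variable at the level of upper integrals; everything else is the group law for $\mathrm{Pr}_j$ and a relabelling of identical fibers. I expect that verifying the shifts between $\Pi_j$ and $\Pi_{-a,j}$ carry unit Jacobian is where care is needed, but this follows directly from the already-established Lemma~\ref{LemmaProjJacobian}.
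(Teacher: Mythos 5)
Your proposal is correct and follows essentially the same route as the paper: apply Lemma~\ref{ProjProperties}(2) to turn the left translation by $\varphi(x_0)^{-1}$ into a fixed bijection of $\Pi_j$ that relabels identical fibers, then check via Lemma~\ref{LemmaProjJacobian} that this relabeling is a unit-Jacobian (hence $\mathcal{H}^{\nu-1}$-preserving) diffeomorphism of $\Pi_j$ and change variables in the upper integral. The only difference is cosmetic (the paper expands $\mathrm{Pr}_j(\varphi(x_0)\cdot\varphi(x_0)^{-1}\varphi(x))$ rather than $\mathrm{Pr}_j(\varphi(x_0)^{-1}\cdot\varphi(x))$, i.e.\ the inverse relabeling), and your explicit factorization of the conjugation through $\Pi_{-a,j}$ is a slightly more detailed justification of the step the paper compresses into one sentence.
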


\begin{proof}
Let $s_0 = \mathrm{pr}_j(x_0)$.
By the properties of $\mathrm{Pr}_j$
(see Lemma~\ref{ProjProperties}) we have
\[
  \mathrm{Pr}_j(\varphi(x))
  = \mathrm{Pr}_j
  (\varphi(x_0) \varphi(x_0)^{-1} \varphi(x))
  = \mathrm{Pr}_j(\varphi(x_0))
  \cdot \gamma_j(s_0)
  \cdot \mathrm{Pr}_j(\varphi(x_0)^{-1}\varphi(x))
  \cdot \gamma_j(-s_0).
\]
Therefore,
\[
  \big\{ x \in E :
  \mathrm{Pr}_j(\varphi(x)) = y \big\}
  = \big\{ x \in E :
  \mathrm{Pr}_j(\varphi(x_0)^{-1}\varphi(x)) = z \big\},
\]
where
$z = \mathrm{Pr}_j(\varphi(x_0)) \, \gamma_j(s_0) \, y \, \gamma_j(-s_0)$. By Lemma~\ref{LemmaProjJacobian} the mapping
$y \mapsto z$ is a diffeomorphism of $\Pi_j$ on $\Pi_j$
with a Jacobian of $1$.
The statement of Lemma follows.
\end{proof}

The following lemma is an analogue of the well-known
Eilenberg's inequality~\cite{Eil, FedEil}
which is a key ingredient of most proofs of the classical
coarea formula. An analogue of this inequality is even
known for Lipschitz mappings of metric spaces
(see review and self-contained proof in~\cite{EsmHaj}).
These results, however, are not applicable to our case
since the composition $\mathrm{Pr}_j \circ \varphi$
is not Lipschitz continuous.

\begin{lemma}[Eilenberg type inequality]
\label{LemmaCoareaAC}
Let $E \subseteq \mathbb{G}$,
$\varphi : E \to \mathbb{G}$ be Lipschitz continuous. Then
\[
  \mathop{\int^*}_{\Pi_j} \mathcal{H}^1 \big(
  {(\mathrm{Pr}_j \circ \varphi)^{-1}(y)} \big)
  d\mathcal{H}^{\nu-1}(y)
  \leq C \, \mathrm{Lip}(\varphi)^{\nu-1}
  \,\mathcal{H}^{\nu}(E),
\]
where the constant $C$ is independent of the choice
of $E$ and $\varphi$.
\end{lemma}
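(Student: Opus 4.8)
The plan is to run the classical covering argument behind Eilenberg's inequality, replacing its only genuinely metric ingredient --- control of the size of the image of a ball --- by a measure-theoretic one adapted to the projection. Write $L = \mathrm{Lip}(\varphi)$ and, for $y \in \Pi_j$, abbreviate the fibre as $F_y = (\mathrm{Pr}_j \circ \varphi)^{-1}(y)$. Fix $\varepsilon > 0$ and choose a countable cover $E \subseteq \bigcup_k B(x_k, r_k)$ by balls of radius $r_k \le \varepsilon$ that is almost optimal for $\mathcal{H}^\nu_\varepsilon$, i.e.\ $\beta_\nu \sum_k r_k^\nu \le \mathcal{H}^\nu(E) + \varepsilon$. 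Since $\varphi$ is $L$-Lipschitz and $\mathrm{diam}\, B(x_k, r_k) \le 2 r_k$, for every $k$ with $B(x_k, r_k) \cap E \ne \varnothing$ I fix a point $x_k^\ast$ in this intersection and obtain $\varphi(B(x_k, r_k) \cap E) \subseteq \overline{B}(\varphi(x_k^\ast), 2 L r_k)$.

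The heart of the argument is the uniform bound
\[
  \mathcal{H}^{\nu-1}\bigl( \mathrm{Pr}_j(\overline{B}(a, \rho)) \bigr) \le c_0 \, \rho^{\nu-1},
  \qquad a \in \mathbb{G}, \ \rho > 0,
\]
with $c_0 = \mathcal{H}^{\nu-1}(\mathrm{Pr}_j(\overline{B}_1))$ \emph{independent of the centre} $a$ and finite because $\overline{B}_1$ is compact and, by Lemma~\ref{LemmaSubgroups}, $\mathcal{H}^{\nu-1}$ is proportional to Lebesgue measure on $\Pi_j$. To prove it I write $\overline{B}(a, \rho) = a \cdot \delta_\rho \overline{B}_1$ and apply property~2 of Lemma~\ref{ProjProperties} together with property~1, which give
\[
  \mathrm{Pr}_j(\overline{B}(a, \rho))
  = \mathrm{Pr}_j(a) \cdot \exp(a_j X_j) \cdot \bigl( \delta_\rho \mathrm{Pr}_j(\overline{B}_1) \bigr) \cdot \exp(-a_j X_j).
\]
Thus the projected ball arises from $\delta_\rho \mathrm{Pr}_j(\overline{B}_1) \subseteq \Pi_j$ by a dilation, a left translation, and a conjugation by $\exp(a_j X_j)$. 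The dilation scales $\mathcal{H}^{\nu-1}$ by $\rho^{\nu-1}$ and left translation preserves it (Lemma~\ref{HausProps}). The crucial point is that conjugation by $\exp(s X_j)$ also preserves $\mathcal{H}^{\nu-1}$ on the normal subgroup $\Pi_j$: since $\mathbb{G}$ is nilpotent, hence unimodular, the inner automorphism $c_s$ preserves $\mathcal{L}^N$, and in the coordinates $x = p \exp(x_j X_j)$ it acts as $(p, x_j) \mapsto (c_s(p), x_j)$, so by the Fubini splitting of Theorem~\ref{ThFubini} it preserves $\mathcal{L}^{N-1}$ on $\Pi_j$, and therefore $\mathcal{H}^{\nu-1}$ by proportionality. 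This is exactly the sense in which projections ``behave well in measure'': the dependence on the (possibly large) height $a_j$ disappears.

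With the key estimate in hand the assembly is routine. For fixed $y$ the balls meeting $F_y$ form a cover of $F_y$ at scale $\varepsilon$, so since $\beta_1 = 2$,
\[
  \mathcal{H}^1_\varepsilon(F_y) \le 2 \sum_{k} r_k \, \mathbf{1}\bigl[\, y \in \mathrm{Pr}_j(\varphi(B(x_k, r_k) \cap E)) \,\bigr].
\]
Integrating in $y$ by countable subadditivity of the upper integral, using the inclusion $\mathrm{Pr}_j(\varphi(B(x_k,r_k) \cap E)) \subseteq \mathrm{Pr}_j(\overline{B}(\varphi(x_k^\ast), 2 L r_k))$ and the key estimate with $\rho = 2 L r_k$, I obtain
\[
  \mathop{\int^*}_{\Pi_j} \mathcal{H}^1_\varepsilon(F_y) \, d\mathcal{H}^{\nu-1}(y)
  \le 2 \sum_k r_k \, c_0 (2 L r_k)^{\nu-1}
  = 2 (2L)^{\nu-1} c_0 \sum_k r_k^\nu
  \le C \, L^{\nu-1} \bigl( \mathcal{H}^\nu(E) + \varepsilon \bigr),
\]
where $C = 2^\nu c_0 / \beta_\nu$ depends only on $\mathbb{G}$ and $j$. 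Letting $\varepsilon \to 0$, the functions $\mathcal{H}^1_\varepsilon(F_y)$ increase to $\mathcal{H}^1(F_y)$, and the monotone convergence theorem for upper integrals (provable via measurable majorants achieving the infimum) moves the limit inside, giving the claimed inequality.

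I expect the decisive step to be the uniform key estimate, not the covering or the passage $\varepsilon \to 0$. A naive bound would let the measure of the projected image of a ball grow with the height $a_j$ of its centre, which is fatal because $E$ may be unbounded while $\mathcal{H}^\nu(E)$ is finite, so the heights $\varphi(x_k^\ast)_j$ in a cover are uncontrolled. The height-independence rests squarely on the conjugation-invariance of $\mathcal{H}^{\nu-1}$ on $\Pi_j$, where the normality of $\Pi_j$ and the unimodularity of $\mathbb{G}$ enter in an essential way.
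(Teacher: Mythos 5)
Your proof is correct and follows essentially the same route as the paper: the classical Eilenberg covering argument at scale $\varepsilon$, with the metric control of image sets replaced by the measure-theoretic fact that $\mathcal{H}^{\nu-1}\bigl(\mathrm{Pr}_j(\overline{B}(a,\rho))\bigr)=c_0\,\rho^{\nu-1}$ uniformly in the centre $a$. The paper packages exactly this invariance differently --- as a unit-Jacobian change of variables on $\Pi_j$ (Lemma~\ref{LemmaCoareaChange}, resting on Lemma~\ref{LemmaProjJacobian}) that recentres each fibre condition at the origin --- whereas you derive the conjugation-invariance of $\mathcal{H}^{\nu-1}$ on the normal subgroup $\Pi_j$ directly from unimodularity and the Fubini splitting; the two arguments are equivalent in substance and yield the same constant.
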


\begin{proof}
The formula is trivial for the set of infinite measure
so we assume $\mathcal{H}^\nu(E) < +\infty$.

Fix $\delta, \varepsilon > 0$.
There is a countable family of balls
$B_k = B_{r_k}(x_k)$, $k \in \mathbb{N}$,
such that
\[
  E \subseteq \bigcup_{k \in \mathbb{N}} B_k, \quad
  \sup_{k \in \mathbb{N}} r_k \le \delta
  \quad \text{and} \quad
  \beta_\nu \sum_{k \in \mathbb{N}} r_k^\nu
  < \mathcal{H}^\nu(E) + \varepsilon.
\]
We may assume without loss of generality that every ball
intersects with $E$.
Let $\widetilde{x}_k \in B_k \cap E$ and
$\widetilde{B}_k = B_{2 r_k}(\widetilde{x}_k)$.
Then
$B_k \subset \widetilde{B}_k$ for all $k \in \mathbb{N}$.
By Lemma~\ref{LemmaCoareaChange}
\[
  \mathop{\int^*}_{\Pi_j} \mathcal{H}^1_{2\delta}
  \big\{ x \in B_k \cap E :
  \mathrm{Pr}_j (\varphi(x)) = y
  \big\}
  \, dy
  \le
  \mathop{\int^*}_{\Pi_j} \mathcal{H}^1_{2\delta}
  \big\{ x \in \widetilde{B}_k \cap E :
  \mathrm{Pr}_j (\varphi(\widetilde{x}_k)^{-1} \varphi(x)) = z
  \big\}
  \, dz.
\]
Since
$\rho(\varphi(\widetilde{x}_k), \varphi(x))
\le \mathrm{Lip}(\varphi) \rho(\widetilde{x}_k, x)
\le 2 \, \mathrm{Lip}(\varphi) r_k$, the function under the last
integral vanishes as soon as
$z \not\in \mathrm{Pr}_j \big( B_{2 \, \mathrm{Lip}(\varphi) r_k}(0) \big)$. Therefore,
\[
  \mathop{\int^*}_{\Pi_j} \mathcal{H}^1_{2\delta}
  \big\{ x \in \widetilde{B}_k \cap E :
  \mathrm{Pr}_j (\varphi(\widetilde{x}_k)^{-1} \varphi(x)) = z
  \big\}
  \, d\mathcal{H}^{\nu-1}(z)
  \le
  \intop_{\mathrm{Pr}_j ( B_{2 \, \mathrm{Lip}(\varphi) r_k})}
  \mathcal{H}^1_{2\delta} ( \widetilde{B}_k )
  \, d\mathcal{H}^{\nu-1}(z).
\]
Since $\mathrm{Pr}_j$ and $\mathcal{H}^{\nu-1}$
are $\delta_r$-homogeneous, it follows
\[
  \mathcal{H}^{\nu-1}\big(
  \mathrm{Pr}_j (B_{2 \, \mathrm{Lip}(\varphi) r_k})
  \big)
  = \mathcal{H}^{\nu-1}\big(
  \delta_{2 \, \mathrm{Lip}(\varphi) r_k}
  \mathrm{Pr}_j (B_1)
  \big)
  = (2 \, \mathrm{Lip}(\varphi) r_k)^{\nu-1}
  \mathcal{H}^{\nu-1}\big( \mathrm{Pr}_j (B_1) \big).
\]
From here, taking into account that
$\mathcal{H}^1_{2\delta} ( \widetilde{B}_k ) \le
2 \beta_1 r_k$,
we have
\[
  \mathop{\int^*}_{\mathrm{Pr}_j ( B_{2 \, \mathrm{Lip}(\varphi) r_k})}
  \mathcal{H}^1_{2\delta} ( \widetilde{B}_k )
  \, d\mathcal{H}^{\nu-1}(z)
  \le
  2^\nu \beta_1
  \mathcal{H}^{\nu-1}\big( \mathrm{Pr}_j (B_1) \big)
  \mathrm{Lip}(\varphi)^{\nu-1}
  r_k^\nu
  =
  C \,
  \mathrm{Lip}(\varphi)^{\nu-1}
  r_k^\nu.
\]
Since for every $y \in \Pi_j$ it holds
\[
  \mathcal{H}^1_{2\delta}
  \big\{ x \in E :
  \mathrm{Pr}_j (\varphi(x)) = y
  \big\}
   \le \sum_{k=1}^\infty
  \mathcal{H}^1_{2\delta}
  \big\{ x \in B_k \cap E :
  \mathrm{Pr}_j (\varphi(x)) = y
  \big\},
\]
we derive
\[
  \mathop{\int^*}_{\Pi_j} \mathcal{H}^1_{2\delta}
  \big(
  (\mathrm{Pr}_j \circ \varphi)^{-1}(y)
  \big)
  \, d\mathcal{H}^{\nu-1}(y)
  \le \sum_{k=1}^\infty
  C \,
  \mathrm{Lip}(\varphi)^{\nu-1}
  r_k^\nu
  \le
  \frac{C}{\beta_\nu}
  \mathrm{Lip}(\varphi)^{\nu-1}
  [\mathcal{H}^\nu(E) + \varepsilon].
\]
Finally, since $\mathcal{H}^1_{2\delta}$ are monotone in
$\delta$, from Fatou's Lemma we obtain
\[
  \mathop{\int^*}_{\Pi_j} \mathcal{H}^1
  \big(
  (\mathrm{Pr}_j \circ \varphi)^{-1}(y)
  \big)
  \, d\mathcal{H}^{\nu-1}(y)
  \le
  \frac{C}{\beta_\nu}
  \mathrm{Lip}(\varphi)^{\nu-1}
  \big[ \mathcal{H}^\nu(E) + \varepsilon].
\]
The statement of Lemma follows from the arbitrary choice of
$\varepsilon > 0$.
\end{proof}

\begin{corollary}
\label{CorrMeasurable}
Let $E \subseteq \mathbb{G}$ be closed and
$\varphi : E \to \mathbb{G}$ be Lipschitz continuous.

\begin{enumerate}

\item
For each compact $K \subset \mathbb{G}$ and $\varepsilon > 0$
the function $g_{j,\varepsilon} : \Pi_j \to [0, +\infty)$
defined as
\[
  g_{j,\varepsilon}(y) = \mathcal{H}^1_\varepsilon
  \big( (\mathrm{Pr}_j \circ \varphi)^{-1}(y) \cap K \big),
\]
is measurable.

\item
For each measurable $A \subset \mathbb{G}$
the function $g_j : \Pi_j \to [0, +\infty]$
defined as
\[
  g_j(y) = \mathcal{H}^1
  \big( (\mathrm{Pr}_j \circ \varphi)^{-1}(y) \cap A \big)
\]
is measurable.

\end{enumerate}
\end{corollary}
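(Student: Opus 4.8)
The plan is to prove part~1 by showing $g_{j,\varepsilon}$ is upper semicontinuous, and then to bootstrap to part~2 through inner/outer approximation, invoking the Eilenberg type estimate of Lemma~\ref{LemmaCoareaAC} at exactly the two places where the outer-measure nature of $\mathcal{H}^1$ would otherwise obstruct the argument. Throughout write $\psi = \mathrm{Pr}_j \circ \varphi$; since $\varphi$ is Lipschitz and $\mathrm{Pr}_j$ is smooth, $\psi$ is continuous, and because $E$ is closed each fiber $\psi^{-1}(y)$ is a closed, hence Borel, subset of $\mathbb{G}$. Replacing $K$ by $K \cap E$ we may assume $K$ is compact.

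For part~1 I would fix $y_0$ and cover the fiber $\psi^{-1}(y_0) \cap K$ by open balls $B_k = B(x_k,r_k)$ with $\sup_k r_k \le \varepsilon$ and $\beta_1 \sum_k r_k < g_{j,\varepsilon}(y_0) + \delta$. Then $U = \bigcup_k B_k$ is open, so $K \setminus U$ is compact and $\psi(K \setminus U)$ is a compact subset of $\Pi_j$ not containing $y_0$ (any $x \in K\setminus U$ lies outside the fiber, so $\psi(x) \ne y_0$). Hence there is a neighbourhood $W \ni y_0$ with $\psi(K\setminus U) \cap W = \varnothing$, which forces $\psi^{-1}(y') \cap K \subseteq U$ and $g_{j,\varepsilon}(y') \le \beta_1 \sum_k r_k < g_{j,\varepsilon}(y_0) + \delta$ for all $y' \in W$. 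Thus $\limsup_{y'\to y_0} g_{j,\varepsilon}(y') \le g_{j,\varepsilon}(y_0)$, and an upper semicontinuous function is Borel measurable.

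For part~2 I would proceed in stages. For compact $K$ the monotonicity $\mathcal{H}^1_\varepsilon \uparrow \mathcal{H}^1$ as $\varepsilon \downarrow 0$ gives $g_j = \sup_{n} g_{j,1/n}$, a countable supremum of the functions from part~1, hence measurable. For open $O$ I write $O = \bigcup_m K_m$ as an increasing union of compacts, and since the fibers are Borel, continuity from below of $\mathcal{H}^1$ yields $g_O = \lim_m g_{K_m}$, measurable. Now let $A$ be measurable and bounded. By inner/outer regularity of $\mathcal{H}^\nu = \Theta_{\mathbb{G}}\mathcal{L}^N$ I choose increasing compacts $K_m \subseteq A$ and decreasing bounded open sets $O_m \supseteq A$ with $\mathcal{H}^\nu(O_m \setminus K_m) \to 0$, and set $K_\infty = \bigcup_m K_m$, $O_\infty = \bigcap_m O_m$, so $K_\infty \subseteq A \subseteq O_\infty$ and $\mathcal{H}^\nu(O_\infty \setminus K_\infty) = 0$. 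Continuity from below gives $g_{K_\infty} = \lim_m g_{K_m}$. Applying Lemma~\ref{LemmaCoareaAC} to $\varphi$ restricted to $\overline{O_1}$ forces $\mathcal{H}^1(\psi^{-1}(y)\cap O_1) < \infty$ for $\mathcal{H}^{\nu-1}$-a.e.\ $y$, so continuity from above of $\mathcal{H}^1$ is licensed on the decreasing Borel fibers $\psi^{-1}(y)\cap O_m$ and gives $g_{O_\infty} = \lim_m g_{O_m}$ for a.e.\ $y$. Finally, applying Lemma~\ref{LemmaCoareaAC} to $\varphi$ restricted to the $\mathcal{H}^\nu$-null set $O_\infty \setminus K_\infty$ yields $\mathcal{H}^1(\psi^{-1}(y)\cap(O_\infty\setminus K_\infty)) = 0$ a.e.; together with the sandwich $g_{K_\infty} \le g_A \le g_{O_\infty}$ this gives $g_A = g_{K_\infty}$ a.e., hence $g_A$ is measurable for the complete $\mathcal{H}^{\nu-1}$-structure. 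The unbounded case follows by writing $A = \bigcup_n (A \cap \overline{B}_n)$ and using continuity from below once more.

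The main obstacle is that $\mathcal{H}^1$ is only an outer measure, so neither additivity on fibers nor continuity from above is available for free; the whole argument is arranged so that continuity from above is invoked solely on a decreasing family of bounded open sets, where the Eilenberg estimate supplies the finiteness that makes it valid, and so that the only passage from Borel to general measurable sets is the absorption of the null difference $O_\infty \setminus K_\infty$, again furnished by that estimate. Keeping the fibers $\psi^{-1}(y)$ Borel—which is precisely where closedness of $E$ enters—is what allows one to treat $B \mapsto \mathcal{H}^1(\psi^{-1}(y)\cap B)$ as a genuine Borel measure for a.e.\ $y$ and apply the standard monotone continuity theorems.
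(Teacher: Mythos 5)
Your proposal is correct and follows essentially the same route as the paper: upper semicontinuity of $g_{j,\varepsilon}$ via a neighbourhood argument on compact fibers, passage to $\mathcal{H}^1$ by monotone limits, exhaustion of open sets by compacts, and reduction of general measurable $A$ to the Borel case by approximation with the error controlled through Lemma~\ref{LemmaCoareaAC}. The only (harmless) variation is that you sandwich $A$ between an inner $F_\sigma$ and an outer $G_\delta$ and absorb the null difference, whereas the paper approximates from outside by open sets only and subtracts; this sidesteps the measurability issue in the subtraction slightly more cleanly but is not a genuinely different argument.
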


\begin{proof}
\textbf{Step 1.}
Let $K$ be a compact set.
Fix $\varepsilon > 0$. Consider a set
\[
  U_\varepsilon(t) = \{ y \in \Pi_j : g_{j,\varepsilon}(y) < t \},
  \quad t > 0.
\]
Let $y \in U_\varepsilon(t)$. Since
$(\mathrm{Pr}_j \circ \varphi)^{-1}(y) \cap K$ is a compact,
there is a finite family of balls $B_k = B_{r_k}(x_k)$, $k = 1, \ldots, m$ such that
\[
  (\mathrm{Pr}_j \circ \varphi)^{-1}(y)
  \subset \bigcup_{k=1}^m B_k,
  \qquad
  r_k \le \varepsilon,
  \qquad
  \beta_1 \sum_{k=1}^m r_k < t.
\]
Since $\mathrm{Pr}_j \circ \varphi$ is continuous and $K$
is compact, $(\mathrm{Pr}_j \circ \varphi)^{-1}(z) \cap A
\subset \bigcup_{k=1}^m B_k$ for $z$ belonging to some
neighborhood of $y$. Thus, $U_\varepsilon(t)$ is open
and $g_{j,\varepsilon}$ is measurable. Hence,
$g_j = \lim\limits_{\varepsilon \to 0} g_{j,\varepsilon}$
is also measurable.

\textbf{Step 2.} Let $A$ be open. Then there exists a sequence
of compacts $K_n$, $n \in \mathbb{N}$,
such that $K_1 \subseteq K_2 \subseteq \ldots \subset A$ and
$\bigcup_{n=1}^\infty K_n = A$. Therefore, the function
\[
  g_j(y) = \mathcal{H}^1
  \big( (\mathrm{Pr}_j \circ \varphi)^{-1}(y) \cap A \big)
  = \lim_{n \to \infty} \mathcal{H}^1
  \big( (\mathrm{Pr}_j \circ \varphi)^{-1}(y) \cap K_n \big),
\]
is measurable.

\textbf{Step 3.}
Let $\mathcal{H}^\nu(A) < \infty$.
There is a sequence of open sets $U_n$, $n \in \mathbb{N}$,
such that $U_1 \supseteq U_2 \supseteq \ldots \supset A$,
$\mathcal{H}^\nu(U_1) < +\infty$, and
\[
  \lim\limits_{n \to \infty} \mathcal{H}^\nu(U_n \setminus A) = 0.
\]
By Lemma~\ref{LemmaCoareaAC}
\[
  \intop_{\Pi_j} \mathcal{H}^1
  \big( (\mathrm{Pr}_j \circ \varphi)^{-1}(y)
  \cap (U_n) \big) d\mathcal{H}^{\nu-1}(y)
  \le C \, \mathrm{Lip}(\varphi)^{\nu-1}
  \mathcal{H}^\nu(U_n) < +\infty.
\]
Therefore, the function
$y \mapsto \mathcal{H}^1
\big( (\mathrm{Pr}_j \circ \varphi)^{-1}(y)
\cap (U_n) \big)$ is finite a.\,e.
Then the sequence of functions
\[
  \mathcal{H}^1
  \big( (\mathrm{Pr}_j \circ \varphi)^{-1}(y)
  \cap (U_n \setminus A) \big)
  =
  \mathcal{H}^1
  \big( (\mathrm{Pr}_j \circ \varphi)^{-1}(y)
  \cap U_n \big)
  -
  \mathcal{H}^1
  \big( (\mathrm{Pr}_j \circ \varphi)^{-1}(y)
  \cap A \big)
\]
is finite a.\,e., decreases monotonically and by
Lemma~\ref{LemmaCoareaAC}
\[
  \mathop{\int^*}_{\Pi_j}
    \mathcal{H}^1
  \big( (\mathrm{Pr}_j \circ \varphi)^{-1}(y)
  \cap (U_n \setminus A) \big) \,
  d\mathcal{H}^{\nu-1}(y)
  \le C \, \mathrm{Lip}(\varphi)^{\nu-1}
  \mathcal{H}^\nu(U_n \setminus A) \to 0
\]
as $n \to \infty$. It follows
\[
  \mathcal{H}^1
  \big( (\mathrm{Pr}_j \circ \varphi)^{-1}(y)
  \cap U_n \big)
  \to
  \mathcal{H}^1
  \big( (\mathrm{Pr}_j \circ \varphi)^{-1}(y)
  \cap A \big)
\]
as $n \to \infty$ for a.\,e. $y \in \Pi_j$.

\textbf{Step 4.}
Finally, if $\mathcal{H}^\nu(A) = +\infty$
it is enough to split $A$ into a countable disjoint
family of measurable sets of finite measure.
The lemma is proved.
\end{proof}

The proof of the following Lemma is inpired in part
by the reasoning in~\cite{Magn2002}.

\begin{lemma}[measure density estimate]
\label{LemmaEpsMeasureDensity}
Let $E \subseteq \mathbb{G}$ be closed and
$\varphi : E \to \mathbb{G}$ be Lipschitz continuous.
Define an outer measure $\mu_{\varepsilon,j}$,
$\varepsilon > 0$,
on subsets
$F \subseteq \mathbb{G}$ as
\[
  \mu_{\varepsilon,j}(F)
  = \mathop{\int^*}_{\mathrm{Pr}_j(\mathbb{G})}
  \mathcal{H}^1_\varepsilon
  \big(
  (\mathrm{Pr}_j \circ \varphi)^{-1}(y) \cap E \cap F
  \big) \, d\mathcal{H}^{\nu-1}(y).
\]
Then for $\mathcal{H}^\nu$-a.\,e.\ $x \in E$ we have
\[
  \limsup_{r \to 0}
  \frac{\mu_{\varepsilon,j}
  \big( \overline{B_r}(x) \big)}
  {\mathcal{H}^\nu
  \big( \overline{B_r}(x) \big)}
  \le
  \mathcal{J}(x, \mathrm{Pr} \circ \varphi).
\]
\end{lemma}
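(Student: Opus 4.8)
The plan is to argue by blow-up at a point of $\mathcal{P}$-differentiability and reduce the density of $\mu_{\varepsilon,j}$ to the coarea integral of the $\mathcal{P}$-differential, which has already been computed in the Lemma preceding Lemma~\ref{LemmaCoareaChange}. First I fix a point $x_0 \in E$ at which $\varphi$ is $\mathcal{P}$-differentiable; this holds for $\mathcal{H}^\nu$-a.e.\ $x_0 \in E$, and I may discard the exceptional null set. Write $L = D_\mathcal{P}\varphi(x_0)$ and $\widehat{L} = \widehat{D}\varphi(x_0)$. Using Lemma~\ref{LemmaCoareaChange} with base point $x_0$, I recenter the mapping: replacing $\mathrm{Pr}_j\circ\varphi$ by $\psi = \mathrm{Pr}_j(\varphi(x_0)^{-1}\varphi(\cdot))$ leaves the fibers unchanged as subsets of $\mathbb{G}$ and only precomposes the $y$-integration with a Jacobian-$1$ diffeomorphism of $\Pi_j$, so the value $\mu_{\varepsilon,j}(\overline{B_r}(x_0))$ is unaffected.

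Next I establish the scaling identity. Setting $\varphi_r(u) = \delta_{1/r}\bigl(\varphi(x_0)^{-1}\varphi(x_0\delta_r u)\bigr)$ and using $\mathrm{Pr}_j\circ\delta_r = \delta_r\circ\mathrm{Pr}_j$ (Lemma~\ref{ProjProperties}), the homogeneity of $\mathcal{H}^1_\varepsilon$ and $\mathcal{H}^{\nu-1}$ (Lemma~\ref{EpsHausProps}), and the changes of variable $x = x_0\delta_r u$, $w = \delta_{1/r}z$, I expect to obtain
\[
  \frac{\mu_{\varepsilon,j}(\overline{B_r}(x_0))}{\mathcal{H}^\nu(\overline{B_r}(x_0))}
  = \frac{1}{\mathcal{H}^\nu(\overline{B}_1)}
  \mathop{\int^*}_{\Pi_j} G_r(w)\,d\mathcal{H}^{\nu-1}(w),
  \qquad
  G_r(w) = \mathcal{H}^1_{\varepsilon/r}\bigl((\mathrm{Pr}_j\circ\varphi_r)^{-1}(w)\cap\overline{B}_1\bigr).
\]
Here $\mathrm{Lip}(\varphi_r)\le\mathrm{Lip}(\varphi)$ and, by $\mathcal{P}$-differentiability, $\varphi_r\to\widehat{L}$ uniformly on $\overline{B}_1$ as $r\to0$, while the scale $\varepsilon/r\to+\infty$. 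Each $G_r$ is measurable by Corollary~\ref{CorrMeasurable}; moreover every fiber lies in $\overline{B}_1$, so $G_r(w)\le 2$ once $r\le\varepsilon$ (a single ball covers $\overline{B}_1$), and $G_r(w)\neq0$ only for $w\in\mathrm{Pr}_j(\overline{B}_{\mathrm{Lip}(\varphi)})=:K_0$ since $\varphi_r(0)=e$. Thus $G_r\le 2\cdot\mathbf{1}_{K_0}$, an integrable dominating function.

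The heart of the proof is the pointwise estimate $\limsup_{r\to0}G_r(w)\le\mathcal{H}^1\bigl((\mathrm{Pr}_j\circ L)^{-1}(w)\cap\overline{B}_1\bigr)=:G_L(w)$ for $\mathcal{H}^{\nu-1}$-a.e.\ $w$. As in the analysis of the preceding Lemma, for a.e.\ $w$ the fiber $(\mathrm{Pr}_j\circ L)^{-1}(w)\cap\overline{B}_1$ is a segment of an integral line of the horizontal field $\mathop{\mathrm{adj}}\widehat{L}\langle X_j\rangle$ of $\mathcal{H}^1$-length $G_L(w)$, while if $\mathop{\mathrm{adj}}\widehat{L}\langle X_j\rangle=0$ or $w$ lies off the lower-dimensional image then $G_L(w)=0$. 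Since $\mathrm{Pr}_j\circ\varphi_r\to\mathrm{Pr}_j\circ L$ uniformly and $\mathrm{Pr}_j\circ L$ is a fixed smooth map whose sublevel sets $\{\,|\mathrm{Pr}_j\circ L-w|<\tau\,\}$ shrink to this segment as $\tau\to0$, the perturbed fiber $(\mathrm{Pr}_j\circ\varphi_r)^{-1}(w)\cap\overline{B}_1$ is contained in an arbitrarily thin neighborhood of the segment for $r$ small. Covering the segment by balls of total weight close to $G_L(w)$ and slightly enlarging them to absorb this neighborhood — which is admissible for $\mathcal{H}^1_{\varepsilon/r}$ precisely because the radius constraint $\varepsilon/r$ is eventually inactive — yields the bound. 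I expect this to be the main obstacle: as $\mathrm{Pr}_j$ is not Lipschitz in the sub-Riemannian metric, the closeness $\varphi_r\approx\widehat{L}$ cannot be transferred to the fibers by a Lipschitz estimate, and one must instead use that projections behave well for coverings to rule out, for a.e.\ $w$, accumulation of extra length in the perturbed level sets.

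Granting the pointwise inequality, the reverse Fatou lemma — justified by the domination $G_r\le 2\cdot\mathbf{1}_{K_0}$ — gives
\[
  \limsup_{r\to0}\frac{\mu_{\varepsilon,j}(\overline{B_r}(x_0))}{\mathcal{H}^\nu(\overline{B_r}(x_0))}
  \le\frac{1}{\mathcal{H}^\nu(\overline{B}_1)}\int_{\Pi_j}G_L(w)\,d\mathcal{H}^{\nu-1}(w).
\]
By the Lemma preceding Lemma~\ref{LemmaCoareaChange} the right-hand side equals $\frac{\Theta_{\Pi_j}}{\Theta_\mathbb{G}}\bigl|\mathop{\mathrm{adj}}\widehat{L}\langle X_j\rangle\bigr| = \mathcal{J}(x_0,\mathrm{Pr}_j\circ\varphi)$, which is the desired density bound at $x_0$ and hence at $\mathcal{H}^\nu$-a.e.\ $x\in E$.
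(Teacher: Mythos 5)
Your proposal is correct and follows essentially the same route as the paper: recenter via Lemma~\ref{LemmaCoareaChange}, blow up at a point of $\mathcal{P}$-differentiability, trap the rescaled fibers in nested compact neighborhoods of the fibers of $\mathrm{Pr}_j\circ \widehat{D}\varphi(x_0)$, pass to the limit by dominated convergence, and identify the limit integral using the lemma on contactomorphisms. The only (harmless) deviation is that you keep the premeasure at scale $\varepsilon/r$ and re-derive its upper semicontinuity on decreasing compacts by an explicit covering, whereas the paper first bounds $\mathcal{H}^1_{\varepsilon/r}\le\mathcal{H}^1_{\varepsilon}$ and invokes property~4 of Lemma~\ref{EpsHausProps} at the fixed scale $\varepsilon$.
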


\begin{proof}
Let $x_0 \in E$ be a density point of~$E$ and
$\mathcal{P}$-\hspace{0pt}differentiability point of~$\varphi$
(almost all points of $E$ are such).
By Lemma~\ref{LemmaCoareaChange}
\begin{multline*}
  \mu_{\varepsilon,j} \big( \overline{B_r}(x_0) \big)
  =
  \int\limits_{\Pi_j} \mathcal{H}^1_\varepsilon
  \big\{ x \in \overline{B_r}(x_0) \cap E :
  \mathrm{Pr}_j (\varphi(x)) = y
  \big\} \, d\mathcal{H}^{\nu-1}(y)
  \\ =
  \int\limits_{\Pi_j} \mathcal{H}^1_\varepsilon
  \big\{ x \in \overline{B_r}(x_0) \cap E :
  \mathrm{Pr}_j (\varphi(x_0)^{-1}\varphi(x)) = z
  \big\} \, d\mathcal{H}^{\nu-1}(z).
\end{multline*}
Since $\mathrm{Pr}_j$ is $\delta_r$-homogeneous we have
\[
  \mathrm{Pr}_j (\varphi(x_0)^{-1} \varphi(x)) = z
  \quad \Leftrightarrow \quad
  \mathrm{Pr}_j \big[ \delta_{\frac 1r} \big(
  \varphi(x_0)^{-1} \varphi(x)
  \big) \big] = \delta_{\frac 1r} z.
\]
Applying a change of variable
$z = \delta_r w$ (in this case $dz = r^{\nu-1} dw$)
to the last integral we obtain
\begin{align}
  &
  \int\limits_{\Pi_j} \mathcal{H}^1_\varepsilon
  \big\{ x \in \overline{B_r}(x_0) \cap E :
  \mathrm{Pr}_j (\varphi(x_0)^{-1} \varphi(x)) = z
  \big\} \, d\mathcal{H}^{\nu-1}(z)
  \notag
  \\ & \qquad =
  \int\limits_{\Pi_j} \mathcal{H}^1_\varepsilon
  \big\{ x \in \overline{B_r}(x_0) \cap E :
  \mathrm{Pr}_j \big[ \delta_{\frac 1r} \big(
  \varphi(x_0)^{-1} \varphi(x)
  \big) \big] = w
  \big\} r^{\nu-1}\, d\mathcal{H}^{\nu-1}(w)
  \notag
  \\ & \qquad = r^{\nu-1}
  \int\limits_{\Pi_j} \mathcal{H}^1_\varepsilon
  \big\{ x \in \overline{B_r}(x_0) \cap E :
  \delta_{\frac 1r} \big(
  \varphi(x_0)^{-1} \varphi(x)
  \big) \in \mathrm{Pr}_j^{-1}(w)
  \big\} \, d\mathcal{H}^{\nu-1}(w).
\label{EqEst1}
\end{align}
Next, consider a change of variable $x = x_0 \delta_r \xi$,
$\xi \in \overline{B_1}$.
From the properties of $\mathcal{H}^1_\varepsilon$
(see Lemma~\ref{EpsHausProps})
for $r \in (0, 1]$ we have
\begin{multline}
\label{EqEst2}
  \mathcal{H}^1_\varepsilon
  \big\{ x \in \overline{B_r}(x_0) \cap E :
  \delta_{\frac 1r} \big(
  \varphi(x_0)^{-1} \varphi(x)
  \big) \in \mathrm{Pr}_j^{-1}(w)
  \big\}
  \\ \le r
  \mathcal{H}^1_\varepsilon
  \big\{ \xi \in \overline{B_1} \cap \delta_{\frac 1r} (x_0^{-1} E) :
  \delta_{\frac 1r} \big(
  \varphi(x_0)^{-1} \varphi(x_0 \delta_r \xi)
  \big) \in \mathrm{Pr}_j^{-1}(w)
  \big\}
  .
\end{multline}
Since $\varphi$ is $\mathcal{P}$-differentiable at $x_0$
it follows
\begin{equation}
\label{EqDiffInCoareaProof}
  \rho \bigl( \delta_{\frac 1r} \bigl(
  \varphi(x_0)^{-1} \varphi(x_0 \delta_r \xi)
  \bigr), L(\xi) \bigr) \le \beta(r),
\end{equation}
where $L = \widehat{D}\varphi(x_0)$ and $\beta(r) \to 0$
monotonically as $r \to 0$. Introduce for each
$w \in \Pi_j$ a family of compact sets
\[
  K_r(w) = \bigl\{ \xi \in \overline{B_1} :
  \mathrm{dist} \bigl(
  L(\xi), \mathrm{Pr}_j^{-1}(w) \bigr) \le \beta(r)
  \bigr\},
  \qquad 0 < r \le 1.
\]
Then $K_{r_1}(w) \subseteq K_{r_2}(w)$ as long as
$0 < r_1 \le r_2 \le 1$ and
\[
  \bigcap_{r > 0} K_r(w) =
  \overline{B_1} \cap (\mathrm{Pr}_j \circ L)^{-1}(w).
\]
From Lemma~\ref{EpsHausProps} we derive
$\mathcal{H}^1_{\varepsilon}(K_r(w)) \to
\mathcal{H}^1_{\varepsilon} \big( \overline{B_1} \cap (\mathrm{Pr}_j \circ L)^{-1}(w) \big)$ as $r \to 0$.
Besides, from~\eqref{EqDiffInCoareaProof}
\[
  \big\{ \xi \in \overline{B_1} \cap \delta_{\frac 1r} (x_0^{-1} E) :
  \delta_{\frac 1r} \big(
  \varphi(x_0)^{-1} \varphi(x_0 \delta_r \xi)
  \big) \in \mathrm{Pr}_j^{-1}(w)
  \big\} \subset K_r(w).
\]
Hance, taking into account~\eqref{EqEst1} and \eqref{EqEst2}
\begin{equation}
\label{EqEst3}
  \frac{1}{r^\nu}
  \int\limits_{\Pi_j} \mathcal{H}^1_\varepsilon
  \big\{ x \in \overline{B_r}(x_0) \cap E :
  \mathrm{Pr}_j \circ \varphi(x) = y
  \big\} \, d\mathcal{H}^{\nu-1}(y)
  \le
  \int\limits_{\Pi_j}
  \mathcal{H}^1_\varepsilon(K_r(w))
  \, d\mathcal{H}^{\nu-1}(w).
\end{equation}
Since $K_1(w) \ne \varnothing$ only for $w$ belonging to
some neighborhood of the origin $\Pi_j \cap B_a$,
the functions under the last integral have a common integrable
dominant
\[
  \mathcal{H}^1_\varepsilon(K_r(w))
  \le
  \mathcal{H}^1_\varepsilon(\overline{B_1})
  \chi_{\Pi_j \cap B_a}(w),
  \qquad r \in (0, 1].
\]
Therefore, going to the limit in~\eqref{EqEst3}
as $r \to 0$ we obtain
\[
  \limsup_{r \to 0}
  \frac{\mu_{\varepsilon,j}
  \big( \overline{B_r}(x_0) \big)}{r^\nu}
  \le
  \int\limits_{\Pi_j}
  \mathcal{H}^1_{\varepsilon} \big( \overline{B_1} \cap (\mathrm{Pr}_j \circ L)^{-1}(w) \big)
  \, d\mathcal{H}^{\nu-1}(w).
\]
Finally, since
$\mathcal{H}^\nu(\overline{B_r}(x_0)) =
\mathcal{H}^\nu(\overline{B_1}) r^\nu$
and $\mathcal{H}^1_\varepsilon \le \mathcal{H}^1$
we derive
\[
  \limsup_{r \to 0}
  \frac{\mu_{\varepsilon,j}
  \big( \overline{B_r}(x_0) \big)}
  {\mathcal{H}^\nu
  \big( \overline{B_r}(x_0) \big)}
  \le \frac{1}{\mathcal{H}^\nu(\overline{B_1})}
  \int\limits_{\Pi_j}
  \mathcal{H}^1 \big( \overline{B_1} \cap (\mathrm{Pr}_j \circ L)^{-1}(w) \big)
  \, d\mathcal{H}^{\nu-1}(w)
  =
  \mathcal{J}(x_0, \mathrm{Pr} \circ \varphi),
\]
which is precisely the statement of Lemma.
\end{proof}

\begin{proof}[Proof of Theorem~\ref{ThCoareaIneq}]
Note that the Lipschitz continuous
mapping $\varphi$ may be extended to the closure
$\overline{E}$ while preserving the Lipschitz constant.
Also, if $\mathcal{H}^\nu(E) = +\infty$ it is enough to split $E$
into a countable family of disjoint subsets and prove the
inequality on each such subset.
So in the following we assume that $E$ is closed and
$\mathcal{H}^\nu(E) < +\infty$.

The mapping
$x \mapsto \mathcal{J}(x, \mathrm{Pr} \circ \varphi)$
is measurable since the partial derivatives
$X_1 \varphi, \ldots, X_{n_1} \varphi$ are measurable.
By Corollary~\ref{CorrMeasurable} the measure
\[
  \mu_j(A) =
  \intop_{\mathrm{Pr}_j(\mathbb{G})} \mathcal{H}^1
  \big( (\mathrm{Pr}_j \circ \varphi)^{-1}(y) \cap A
  \big) \, d\mathcal{H}^{\nu-1}(y)
\]
is defined on all measurable
$A \subseteq \mathbb{G}$.

\textbf{Step 1.}
Let $V \subseteq A$ be measurable and such that
$\mathcal{J}(x, \mathrm{Pr} \circ \varphi) < c$
for a.\,e.\ $x \in V$.
Prove that $\mu_j(V) \le c \mathcal{H}^\nu(A)$.

Fix $\delta > 0$. There is open
$U \supseteq V$ such that
$\mathcal{H}^\nu(U \setminus V) < \delta$.
Fix $\varepsilon > 0$.
By Lemma~\ref{LemmaEpsMeasureDensity}
there exists a null set $\Sigma_1 \subset V$
such that for each $x \in V \setminus \Sigma_1$ there is
$r_\varepsilon(x) > 0$ where
\[
  \frac{\mathcal{\mu}_{\varepsilon, j}
  (\overline{B_r}(x))}
  {\mathcal{H}^\nu(\overline{B_r}(x))} < c,
  \quad 0 < r \le r_\varepsilon(x).
\]
Consider a family of balls $\overline{B_r}(x)$
such that
$x \in V \setminus \Sigma_1$,
$\overline{B_r}(x) \subseteq U$,
$0 < r \le r_\varepsilon$. By Vitali's Theorem there is
a countable disjoint subset
$\{ \overline{B_{r_k}}(x_k) \}$ that covers
$V \setminus \Sigma_1$ up to a null set
$\Sigma_2$.
By Lemma~\ref{LemmaCoareaAC} we have
$\mu_{\varepsilon,j}(\Sigma_i) \le \mu_{j}(\Sigma_i) = 0$, $i = 1, 2$.
Therefore,
\[
  \mu_{\varepsilon,j}(V)
  \le \sum_{k=1}^\infty
  \mu_{\varepsilon,j}(\overline{B_{r_k}}(x_k))
  \le c \sum_{k=1}^\infty
  \mathcal{H}^\nu(\overline{B_{r_k}}(x_k))
  \le c \mathcal{H}^\nu(U)
  < c (\mathcal{H}^\nu(V) + \delta).
\]
Since $\delta > 0$ is arbitrary and
$\mu_{\varepsilon,j}$ is increasing monotonically as $\varepsilon$
decreases the statement of Step~1 follows.

\textbf{Step 2}.
Since $\varphi$ is Lipschitz continuous,
$\mathcal{J}(x, \mathrm{Pr} \circ \varphi)$
is bounded a.\,e. Let
$\|  \mathcal{J} \|_\infty = M$.
Denote
\[
  A_{kn} = \Bigl\{ x \in A :
  M \frac{k}{2^n} \le \mathcal{J}(x, \mathrm{Pr} \circ \varphi) < M \frac{k+1}{2^n}
  \Bigr\},
  \qquad
  k = 0, \ldots, 2^n.
\]
The sequence of simple functions
\[
  g_n(x) = \sum_{k=0}^{2^n} M\frac{k+1}{2^n} \chi_{A_{kn}}(x)
\]
is monotonically decreasing and converges to
$\mathcal{J}(x, \mathrm{Pr}_j \circ \varphi)$
for a.\,e.\ $x \in A$.
By step~1
\[
  \mu_j(A)
  \le \sum_{k=0}^{2^n} \mu_j(A_{kn})
  \le \sum_{k=0}^{2^n} M\frac{k+1}{2^n}
  \mathcal{H}^\nu(A_{kn})
  = \int\limits_A g_n(x) \, d\mathcal{H}^\nu(x).
\]
As $n \to \infty$
we obtain the statement of Theorem.
\end{proof}

The equality in Theorem~\ref{ThCoareaIneq} may be proved for
some classes of functions and for functions on certain Carnot
groups.

\begin{definition}
The a.\,e. differentiable function $f : E \subset \mathbb{G}$,
$E \subset \mathbb{G}$, has \emph{finite codistortion}
if
$\bigl| \mathop{\mathrm{adj}} \widehat{D}f(x) \bigr| = 0$
for a.\ e.\ points $x \in E$ such that
$\det \widehat{D} f(x) = 0$.
\end{definition}

\begin{corollary}
\label{ThCoareaFinCod}
Let $E \subseteq \mathbb{G}$,
$\varphi : E \to \mathbb{G}$ be Lipschitz continuous and
have finite codistortion.
Then for every measurable $A \subseteq E$
\[
  \intop_{\Pi_j} \mathcal{H}^1
  \big( (\mathrm{Pr}_j \circ \varphi)^{-1}(p) \cap A
  \big) \, d\mathcal{H}^{\nu-1}(p)
  = \frac{\Theta_{\Pi_j}}{\Theta_\mathbb{G}}
  \int\limits_A
  \bigl| \mathop{\mathrm{adj}} \widehat{D}\varphi(x) \langle X_j \rangle \bigr|
  \, d\mathcal{H}^\nu(x).
\]
\end{corollary}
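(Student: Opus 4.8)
Since Theorem~\ref{ThCoareaIneq} already supplies the inequality ``$\le$'', the plan is to establish the reverse inequality for the set function
\[
  \mu_j(A) := \intop_{\Pi_j} \mathcal{H}^1\bigl((\mathrm{Pr}_j\circ\varphi)^{-1}(y)\cap A\bigr)\,d\mathcal{H}^{\nu-1}(y) \ \ge\ \frac{\Theta_{\Pi_j}}{\Theta_\mathbb{G}}\intop_A \bigl|\mathop{\mathrm{adj}}\widehat{D}\varphi(x)\langle X_j\rangle\bigr|\,d\mathcal{H}^\nu(x).
\]
First I would discard the $\mathcal{H}^\nu$-null set of non-$\mathcal{P}$-differentiability points and split $A = A_0\cup A_+$, where $A_0 = \{x : \det\widehat{D}\varphi(x)=0\}$ and $A_+ = \{x : \det\widehat{D}\varphi(x)\ne 0\}$. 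On $A_0$ the finite codistortion hypothesis gives $|\mathop{\mathrm{adj}}\widehat{D}\varphi(x)| = 0$ for a.e.\ $x$; since $\mathop{\mathrm{adj}}\widehat{D}\varphi(x)\langle X_j\rangle$ is one of its columns, the right-hand integrand vanishes a.e.\ on $A_0$, and then Theorem~\ref{ThCoareaIneq} forces $\mu_j(A_0)=0$ as well. Thus equality holds trivially over $A_0$, and it remains to treat $A_+$.

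On $A_+$ the $\mathcal{P}$-differential $L = \widehat{D}\varphi(x_0)$ is an invertible graded homomorphism for a.e.\ $x_0$, and the plan is to complement the one-sided estimate of Lemma~\ref{LemmaEpsMeasureDensity} with a matching lower density bound
\[
  \liminf_{r\to 0}\frac{\mu_j\bigl(\overline{B_r}(x_0)\bigr)}{\mathcal{H}^\nu\bigl(\overline{B_r}(x_0)\bigr)} \ \ge\ \mathcal{J}(x_0,\mathrm{Pr}_j\circ\varphi) \qquad \text{for a.e. } x_0\in A_+.
\]
I would run the same blow-up as in Lemma~\ref{LemmaEpsMeasureDensity}: the rescaled maps $\Phi_r(\xi) = \delta_{1/r}\bigl(\varphi(x_0)^{-1}\varphi(x_0\delta_r\xi)\bigr)$ converge uniformly on $\overline{B_1}$ to $L$ by Pansu differentiability. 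The new ingredient is that, $L$ being invertible, each fibre $\overline{B_1}\cap(\mathrm{Pr}_j\circ L)^{-1}(w)$ is a single segment of the integral curve of the horizontal field $\mathop{\mathrm{adj}}\widehat{L}\langle X_j\rangle$, and $\mathrm{Pr}_j\circ L$ is an open map; hence for a.e.\ $w$ the fibres $\overline{B_1}\cap(\mathrm{Pr}_j\circ\Phi_r)^{-1}(w)$ are connected curves converging to that segment in the Hausdorff distance. By lower semicontinuity of $\mathcal{H}^1$ on continua under Hausdorff convergence (Go\l\k{a}b's theorem), together with Fatou's lemma in the $w$-variable, the rescaled slice integral is bounded below by $\intop_{\Pi_j}\mathcal{H}^1\bigl(\overline{B_1}\cap(\mathrm{Pr}_j\circ L)^{-1}(w)\bigr)\,d\mathcal{H}^{\nu-1}(w)$; the density computation for contactomorphisms (the lemma preceding Lemma~\ref{LemmaCoareaChange}) evaluates this exactly as $\mathcal{H}^\nu(\overline{B_1})\,\mathcal{J}(x_0,\mathrm{Pr}_j\circ\varphi)$, which is the desired lower bound.

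Finally I would conclude by differentiation of measures. Theorem~\ref{ThCoareaIneq} shows that the restriction of $\mu_j$ to $A_+$ is absolutely continuous with respect to $\mathcal{H}^\nu$ with density at most $\mathcal{J}(\cdot,\mathrm{Pr}_j\circ\varphi)$; since $\mathcal{H}^\nu$ is doubling, the ball-ratio limit computes this density a.e., so the lower bound above upgrades the density to an equality a.e.\ on $A_+$. Integrating gives $\mu_j(A_+) = \frac{\Theta_{\Pi_j}}{\Theta_\mathbb{G}}\intop_{A_+}\bigl|\mathop{\mathrm{adj}}\widehat{D}\varphi(x)\langle X_j\rangle\bigr|\,d\mathcal{H}^\nu(x)$, and adding the trivial $A_0$ part yields the corollary. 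I expect the genuine obstacle to be the lower density bound on $A_+$: because $\mathrm{Pr}_j\circ\varphi$ is not Lipschitz, the non-degeneration of the fibre length in the limit cannot come from a metric contraction estimate and must instead be extracted from the invertibility of $L$, which guarantees that the perturbed fibres remain full curves spanning $\overline{B_1}$ rather than collapsing or fragmenting, so that the semicontinuity of length applies uniformly over the relevant compact range of $w$.
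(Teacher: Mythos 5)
Your decomposition and your treatment of the degenerate set $A_0$ match the paper (there the finite-codistortion hypothesis kills the sub-Riemannian Jacobian on $\{\det\widehat{D}\varphi=0\}$ and Theorem~\ref{ThCoareaIneq} then forces $\mu_j=0$ there; the null set of non-differentiability points is handled by Lemma~\ref{LemmaCoareaAC}). The problem is your treatment of $A_+$. The step ``for a.e.\ $w$ the fibres $\overline{B_1}\cap(\mathrm{Pr}_j\circ\Phi_r)^{-1}(w)$ are connected curves converging to the limit segment in Hausdorff distance'' is asserted, not proved, and it is false in the generality you need. First, $\Phi_r$ is only defined on $\overline{B_1}\cap\delta_{1/r}(x_0^{-1}E)$, and $E$ (and $A$) are merely measurable sets, possibly with empty interior; the fibres are then arbitrary closed subsets of that set, not continua, and Go\l\k{a}b's theorem does not apply --- $\mathcal{H}^1$ is badly \emph{not} lower semicontinuous under Hausdorff convergence once connectedness is dropped (totally disconnected sets of positive length are Hausdorff-approximated by finite sets). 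Second, even if $E$ were open, a fibre of $\mathrm{Pr}_j\circ\Phi_r$ is the preimage of a line under a continuous map, and preimages under continuous maps need not be connected; uniform convergence $\Phi_r\to L$ only gives the \emph{upper} containment $(\mathrm{Pr}_j\circ\Phi_r)^{-1}(w)\subset K_r(w)$ used in Lemma~\ref{LemmaEpsMeasureDensity}, not the reverse Hausdorff approximation. A degree argument gives nonemptiness of the fibres for invertible $L$, and the Lipschitz bound on $\Phi_r$ gives a lower length bound of the form $\mathcal{H}^1(\text{segment})/\mathrm{Lip}(\varphi)$, but neither yields the sharp constant your density identity requires. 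So the core of the reverse inequality is missing.

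The paper closes the gap by an entirely measure-theoretic route that avoids any semicontinuity of length: on $A\setminus(\Sigma\cup Z)$ it decomposes the domain into countably many pieces $F_k$ on which $\varphi$ is bi-Lipschitz (\cite[Remark~15]{VodEvs2014}), writes the fibre $(\mathrm{Pr}_j\circ\varphi)^{-1}(p)\cap F_k$ as $\psi\bigl(\mathrm{Pr}_j^{-1}(p)\cap\varphi(F_k)\bigr)$ with $\psi=(\varphi|_{F_k})^{-1}$, computes its $\mathcal{H}^1$ exactly by the one-dimensional area formula $\int_{I_{p,j}}|X_j\psi|\,dt$ for the injective Lipschitz curve $t\mapsto\psi(p\exp(tX_j))$, integrates over $p$ via the Fubini-type Theorem~\ref{ThFubini}, and converts back with the Lipschitz change-of-variables formula and the identity $|X_j\psi\circ\varphi|\,|\det\widehat{D}\varphi|=|\mathop{\mathrm{adj}}\widehat{D}\varphi\langle X_j\rangle|$. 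If you want to salvage a blow-up proof you would still have to pass through some such bi-Lipschitz (or injectivity) decomposition to make the fibres genuinely one-dimensional curves; as written, the proposal does not prove the reverse inequality on $A_+$.
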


\begin{proof}
Let $\Sigma \subset A$ be a set of points which are not
density points of $A$ or in which $\varphi$ is not
differentiable. Then $\Sigma$ is a null set and by
Lemma~\ref{LemmaCoareaAC}
\[
  \intop_{\Pi_j} \mathcal{H}^1
  \big( (\mathrm{Pr}_j \circ \varphi)^{-1}(p)
  \cap \Sigma
  \big) \, d\mathcal{H}^{\nu-1}(p) = 0.
\]
Next, let
$Z = \{ x \in A \setminus \Sigma : \det \widehat{D} \varphi(x) = 0 \}$. Then $\mathcal{J}(x, \mathrm{Pr} \circ \varphi) = 0$
on $Z$ and by Theorem~\ref{ThCoareaIneq}
\[
  \intop_{\Pi_j} \mathcal{H}^1
  \big( (\mathrm{Pr}_j \circ \varphi)^{-1}(p)
  \cap Z
  \big) \, d\mathcal{H}^{\nu-1}(p) = 0.
\]
The set $A \setminus (\Sigma \cup Z)$ is a set of density
points where $\det \widehat{D} \varphi(x) \ne 0$. It may be
split into a countable disjoint family of sets
$F_k$, $k \in \mathbb{N}$, such that
$\varphi|_{F_k}$ is bi-Lipschitz
(see, e.\,g. \cite[Remark~15]{VodEvs2014}).
Let $\psi : \varphi(F_k) \to F_k$ be the inverse map of
$\varphi|_{F_k}$. For every $p \in \Pi_j$ denote
\[
  I_{p,j} =
  \{ t \in \mathbb{R} : p \exp(t X_j) \in \varphi(F_k) \}.
\]
Then by Fubini theorem~\ref{ThFubini}
\[
  \intop_{\varphi(F_k)}
  \frac{\Theta_{\Pi_j}}{\Theta_\mathbb{G}}
  \left| X_j \psi(y) \right| \, d\mathcal{H}^\nu(y)
  =
  \intop_{\Pi_j} d\mathcal{H}^{\nu-1}(p)
  \intop_{I_{p,j}} \left| X_j \psi(p \exp(t X_j)) \right| \, dt.
\]
The function
$\gamma : t \mapsto \psi(p \exp(t X_j))$
is Lipschitz continuous and
$\gamma' = X_j \psi$, so
\begin{multline*}
  \intop_{I_{p,j}}
  \left| X_j \psi(p \exp(t X_j)) \right| \, dt
  =
  \intop_{I_{p,j}}
  \left| \gamma'(p \exp(t X_j)) \right| \, dt
  =
  \mathcal{H}^1 ( \gamma(I_p) ) \\
  =
  \mathcal{H}^1 \bigl(
  \psi(\mathrm{Pr}_j^{-1}(p) \cap \varphi(F_k) ) \bigr)
  =
  \mathcal{H}^1 \bigl(
  (\mathrm{Pr}_j \circ \varphi)^{-1}(p) \cap F_k \bigr).
\end{multline*}
Consequently,
\[
  \intop_{\varphi(F_k)}
  \frac{\Theta_{\Pi_j}}{\Theta_\mathbb{G}}
  \left| X_j \psi(y) \right| \, d\mathcal{H}^\nu(y)
  =
  \intop_{\Pi_j} \mathcal{H}^1 \bigl(
  (\mathrm{Pr}_j \circ \varphi)^{-1}(p) \cap F_k \bigr)
  \, d\mathcal{H}^{\nu-1}(p).
\]
Applying a change of variable formula for Lipschitz
functions~\cite[Theorem~7]{VodUkh1996} to the left hand side
we derive
\[
  \intop_{\varphi(F_k)}
  \left| X_j \psi(y) \right| \, d\mathcal{H}^\nu(y)
  = \intop_{F_k}
  \left| X_j \psi(\varphi(x)) \right|
  \bigl| \det \widehat{D}\varphi(x) \bigr|
  \, d\mathcal{H}^\nu(x)
  = \intop_{F_k}
  \bigl| \mathop{\mathrm{adj}} \widehat{D}\varphi(x)
  \langle X_j \rangle \bigr| \, d\mathcal{H}^\nu(x).
\]
Thus, the coarea equality is proved on $F_k$. Since $E$
is a disjoint union of $\Sigma$, $Z$ and $F_k$,
$k \in \mathbb{N}$, the corollary is proved.
\end{proof}

The structure of Carnot group places certain restrictions
on contact functions. Some Carnot groups are such that every
(regular enough) contact mapping has finite codistortion.
Let us show that is the case for the simplest non-Abelian
Carnot group, the Heisenberg group $\mathbb{H}^1$.

Recall, that $\mathbb{H}^1 = (\mathbb{R}^3, \cdot)$ is
a two-step Carnot group with the group operation
\[
  (x, y, z) \cdot (x', y', z')
  = \Bigl( x+x', y + y', z + z' + \frac{xy' - yx'}{2} \Bigr).
\]
The basis of its Lie algebra $\mathfrak{h}_1$
of left-invariant vector fields is
\[
  X = \partial_{x} - \frac{y}{2} \partial_z,
  \quad
  Y = \partial_{y} + \frac{x}{2} \partial_z,
  \quad
  Z = [X, Y] = \partial_{z}.
\]
The horizontal subspace is $H \mathbb{H}^1 =
\mathop{\mathrm{span}} \{ X, Y \}$.

\begin{lemma}
Every contact mapping on $\mathbb{H}^1$
has finite codistortion.
\end{lemma}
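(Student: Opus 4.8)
The plan is to reduce the statement to a pointwise claim in linear algebra about the Pansu differential at an arbitrary point of $\mathcal{P}$-differentiability. At almost every such point the homomorphism of Lie algebras $\widehat{L} = \widehat{D}\varphi(x)$ is, by definition, a graded homomorphism of $\mathfrak{h}_1$, hence (as already noted in the excerpt) block-diagonal in the graded basis $X, Y, Z$. Thus $\widehat{L}$ preserves the horizontal space $H\mathbb{H}^1 = \mathrm{span}\{X, Y\}$ and the vertical line $\mathrm{span}\{Z\}$; write $A$ for the $2 \times 2$ matrix of its restriction to $H\mathbb{H}^1$ and $e$ for the scalar by which it acts on $Z$. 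It then suffices to prove that $\det \widehat{L} = 0$ forces $\mathop{\mathrm{adj}} \widehat{L} = 0$, since this is exactly the finite codistortion condition at $x$.

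First I would exploit the defining bracket relation $Z = [X, Y]$ together with the homomorphism property $\widehat{L}[X, Y] = [\widehat{L}X, \widehat{L}Y]$ to pin down the vertical action in terms of the horizontal one. Writing $\widehat{L}X = aX + cY$ and $\widehat{L}Y = bX + dY$, a one-line computation gives $[\widehat{L}X, \widehat{L}Y] = (ad - bc)[X, Y] = (\det A)\, Z$, so that $e = \det A$. Hence in the basis $X, Y, Z$ the differential has the form
\[
  \widehat{L} =
  \begin{pmatrix}
  a & b & 0 \\
  c & d & 0 \\
  0 & 0 & \det A
  \end{pmatrix},
  \qquad
  \det \widehat{L} = (\det A)^2.
\]

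The conclusion is then immediate from a rank count. If $\det \widehat{D}\varphi(x) = 0$, then $\det A = 0$, so the horizontal block $A$ has rank at most $1$ while the vertical entry $\det A$ also vanishes; consequently $\widehat{L}$ itself has rank at most $1$. For a $3 \times 3$ matrix this means that every $2 \times 2$ minor vanishes, and since the entries of $\mathop{\mathrm{adj}} \widehat{L}$ are precisely these minors up to sign, we obtain $\mathop{\mathrm{adj}} \widehat{D}\varphi(x) = 0$, so in particular $\bigl| \mathop{\mathrm{adj}} \widehat{D}\varphi(x) \bigr| = 0$. As this holds at $\mathcal{H}^\nu$-almost every point where $\det \widehat{D}\varphi = 0$, the mapping has finite codistortion.

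I expect no real obstacle here; the only step requiring care is the correct bookkeeping of the graded structure, namely deriving $e = \det A$ from the bracket relation rather than treating the horizontal and vertical blocks as independent. It is exactly this rigidity, forced by the Lie algebra homomorphism condition on the two-step group $\mathbb{H}^1$, that makes the vanishing of $\det \widehat{L}$ strictly stronger than in the Euclidean (abelian) setting and delivers finite codistortion for free.
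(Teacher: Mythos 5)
Your proof is correct and follows essentially the same route as the paper: both exploit the graded homomorphism property to force the vertical entry of $\widehat{D}\varphi$ to equal the determinant of the horizontal block, so that $\det \widehat{D}\varphi = (\det A)^2$ and the vanishing of $\det A$ annihilates the adjugate. The only cosmetic difference is that you conclude via a rank count (rank $\le 1$ kills all $2\times 2$ minors) whereas the paper writes out $\mathop{\mathrm{adj}} \widehat{D}\varphi$ explicitly and observes that every entry carries a factor of $J_h\varphi$.
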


\begin{proof}
Let $\varphi = (f, g, h)$ be a mapping from (a subset of)
$\mathbb{H}^1$ to $\mathbb{H}^1$. And $p \in E$ be a point of
its $\mathcal{P}$-differentiability. For brevity we omit the
point. From contactness it follows
\[
  X\varphi = X f \, X + Xg \, Y,
  \qquad
  Y\varphi = Y f \, X + Yg \, Y.
\]
Since $\widehat{D}\varphi$ is a graded homomorphism of
Lie algebras,
\[
  \widehat{D}\varphi \langle Z \rangle
  = \widehat{D}\varphi \langle [X, Y] \rangle
  = [ \widehat{D}\varphi \langle  X \rangle,
    \widehat{D}\varphi \langle Y \rangle ]
  = [ X \varphi, Y \varphi ]
  = (Xf \, Yg - Xg \, Y f)Z.
\]
Thus, in the basis $X, Y, Z$ we have
\[
  \widehat{D} \varphi =
  \begin{pmatrix}
  Xf & Yf & 0 \\
  Xg & Yg & 0 \\
  0  &  0 & J_h \varphi
  \end{pmatrix},
\]
where $J_h \varphi = Xf \, Yg - Xg \, Y f$.
Then $\det \widehat{D}\varphi(p) = J_h \varphi(p)^2$.
Computing the adjoint matrix we obtain
\[
  \mathop{\mathrm{adj}} \widehat{D} \varphi
  =
  \begin{pmatrix}
  \phantom{+}Y g \, J_h \varphi & - Y f \, J_h \varphi & 0 \\
  -X g \, J_h \varphi & \phantom{+}X f \, J_h \varphi & 0 \\
  0 & 0 & J_h \varphi
  \end{pmatrix}
  .
\]
Consequently,
$\mathop{\mathrm{adj}} \widehat{D}\varphi(p) = 0$
as soon as $J_h \varphi(p) = 0$.
\end{proof}

One can check that the same is true for any Carnot
group $\mathbb{G}$ with $\dim H \mathbb{G} = 2$,
for instance, Engel group, and for direct products of such
groups.

\begin{corollary}
\label{ThCoareaH1}
The coarea equality~\ref{ThCoareaFinCod} holds for
any Lipschitz mapping $\varphi : E \to \mathbb{H}^1$,
$E \subset \mathbb{H}^1$.
\end{corollary}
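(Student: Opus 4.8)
The plan is to obtain this corollary as a direct combination of the two immediately preceding results: the coarea \emph{equality} of Corollary~\ref{ThCoareaFinCod}, which holds for every Lipschitz mapping of finite codistortion, and the Lemma just above it, which asserts that every contact mapping on $\mathbb{H}^1$ has finite codistortion. Thus the only thing I need to verify is that an \emph{arbitrary} Lipschitz mapping $\varphi : E \to \mathbb{H}^1$ is of finite codistortion; once this is established the equality is immediate.

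First I would invoke Pansu's differentiability theorem as stated earlier in the paper (from~\cite{Pa1989} for open domains and~\cite{VodUkh1996} for measurable ones): since $\varphi$ is Lipschitz continuous, it is $\mathcal{P}$-differentiable at $\mathcal{H}^\nu$-almost every $x \in E$, and at each such point the $\mathcal{P}$-differential $D_\mathcal{P}\varphi(x)$ is a contactomorphism, so that $\widehat{D}\varphi(x)$ is a graded homomorphism of the Lie algebra $\mathfrak{h}_1$. Hence $\varphi$ is contact at a.\,e.\ point. At any such point the computation in the preceding Lemma applies verbatim: one has $\det \widehat{D}\varphi(x) = J_h\varphi(x)^2$ and $\mathop{\mathrm{adj}} \widehat{D}\varphi(x) = 0$ whenever $J_h\varphi(x) = 0$. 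Therefore at a.\,e.\ $x \in E$ the vanishing of $\det \widehat{D}\varphi(x)$ forces $\bigl|\mathop{\mathrm{adj}}\widehat{D}\varphi(x)\bigr| = 0$, which is precisely the finite-codistortion condition on $E$.

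With finite codistortion in hand, the coarea equality~\ref{ThCoareaFinCod} applies to $\varphi$ for every measurable $A \subseteq E$, which is the assertion of the corollary. I do not expect any genuine obstacle at this stage: the substantive analysis has already been carried out, namely the coarea equality for finite-codistortion maps in Corollary~\ref{ThCoareaFinCod} and the structural fact (forced by the two-dimensional horizontal layer of $\mathbb{H}^1$) that contactness alone implies finite codistortion in the preceding Lemma. The corollary is simply their conjunction, the only minor point to record being that $\mathcal{P}$-differentiability a.\,e.\ for Lipschitz maps supplies the ``a.\,e.\ differentiable and contact'' hypothesis needed to invoke that Lemma pointwise.
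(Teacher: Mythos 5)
Your proposal is correct and is exactly the argument the paper intends: the corollary follows by combining Pansu's a.\,e.\ $\mathcal{P}$-differentiability of Lipschitz maps, the preceding Lemma (every contact mapping on $\mathbb{H}^1$ has finite codistortion), and Corollary~\ref{ThCoareaFinCod}. The paper leaves this chain implicit, and you have filled it in with the right pieces.
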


\bibliographystyle{abbrv}
\bibliography{projection-coarea}

\noindent
Sergey Basalaev
(ORCID 0000-0002-4161-9948), \\
Novosibirsk State University, \\
1 Pirogova st., 630090 Novosibirsk Russia \\
\texttt{s.basalaev@g.nsu.ru}

\end{document}